\newtheorem{theorem}{Theorem}[section]
\newtheorem*{theorem A}{Theorem A}
\newtheorem*{theorem B}{N\"olker's Theorem}
\theoremstyle{remark}
\newtheorem{remark}{Remark}[section]
\theoremstyle{remark}
\theoremstyle{definition}
\numberwithin{equation}{section}
\def\({\left ( }
\def\){\right )}
\def\<{\left < }
\def\>{\right >}
\begin{document}
\title[Classifications of Translation Surfaces in Isotropic Geometry] {Classifications of translation surfaces in isotropic geometry with constant curvature}
\author{Muhittin Evren Aydin}
\address{Department of Mathematics, Faculty of Science, Firat University, Elazig, 23200, Turkey}
\email{meaydin@firat.edu.tr}
\thanks{}
\subjclass[2000]{53A10, 53A35, 53A40, 53B25.}
\keywords{Isotropic geometry, translation surface, affine translation surface, Scherk surface, isotropic
Gaussian curvature, isotropic mean curvature.}

\begin{abstract}
We classify translation surfaces in isotropic geometry with \linebreak arbitrary constant isotropic Gaussian and mean curvature under the\linebreak condition that at least one of translating curves lies in a plane.
\end{abstract}

\maketitle

\section{Introduction}
A \textit{translation surface} in a Euclidean space $\mathbb{R}^{3}$ that is generated by translating one curve along another
one locally has the form 
\begin{equation}
r\left( x,y\right) =\alpha \left( x\right)
+ \beta \left( y\right) ,
\end{equation}
where $\alpha $, $\beta $ are \textit{%
translating curves}, see \cite{5}. The recent developments relating to such surfaces in $\mathbb{R}^{3}$  with constant Gaussian and mean curvature were well-structured in Lopez and Moruz's work \cite{16}.

 If $\alpha$ and $\beta$ lie in orthogonal planes then, by a
coordinate change, the surface can be locally described in explicit form
\begin{equation}
z\left( x,y\right) =f\left( x\right) +g\left( y\right) ,
\end{equation}%
where $f,g$ are smooth real-valued functions of one variable. In this case, except the planes, only minimal translation surface (i.e. mean curvature vanishes identically) is the \textit{Scherk surface}, namely the graph of (\cite{25})
\begin{equation*}
z\left( x,y\right) =\frac{1}{c}\log \left\vert \frac{\cos \left(
cy\right) }{\cos \left( cx\right) }\right\vert  ,\text{ }c\in \mathbb{R-}\left\{ 0\right\} .
\end{equation*}%
The extensions of this concept in (semi-) Euclidean and homogeneous spaces can be found in \cite{6}-\cite{8}, \cite{10}-\cite{12}, \cite{15,17,19,20,26,28,29}.

Most recently, the notion of \textit{affine translation surfaces} in $\mathbb{R}^{3}$ were introduced by Liu and Yu \cite{13} as the graphs of 
\begin{equation}
z\left( x,y\right) =f\left( x\right) +g\left( y+ax\right) , \text{ }a\in \mathbb{R-}\left\{ 0\right\} . 
\end{equation}%
By the coordinate change $x=u$, $y=v-au$ in (1.3) one can be parametrized by
\begin{equation*}
r\left( u,v\right) =\left( u,v-au,f\left( u\right) +g\left( v\right) \right),
\end{equation*}%
where the translating curves are in the planes $x=0$ and $%
ax+y=0.$ Since $a\neq 0,$ these planes are not orthogonal and the obtained
surface is a natural \linebreak generalization of (1.2). In same paper, the authors conjectured that, expect planes, only minimal affine translation surface, usually called \textit{affine Scherk surface}, is the graph of the function (also see \cite{14,30})%
\begin{equation*}
z\left( x,y\right) =\frac{1}{c} \log \left\vert \frac{\cos \left( c%
\sqrt{1+a^{2}}x\right) }{\cos \left( c\left[ y+ax\right] \right) }%
\right\vert ,\text{ }a,c\in \mathbb{R-}\left\{ 0\right\}.
\end{equation*}

In the present paper, by following this idea, we introduce and classify a new type of affine translation surfaces in isotropic geometry with constant isotropic Gaussian curvature (CIGC) and consant isotropic mean curvature (CIMC). In addition, we obtain the surfaces with CIGC and CIMC whose one translating curve is planar and other one space curve.

\section{Preliminaries}

For fundamental notions of curves and surfaces in isotropic geometry that is one of the Cayley-Klein geometries, we refer the reader to \cite{3,4,9}, \cite{21}-\cite{24}. It can be described via the projective space as next paragraph:

Let $\mathbb{P}^{3}$ denote the projective space and $\Gamma $ a plane in $\mathbb{%
P}^{3}.$ Then an affine space can be obtained from $\mathbb{P}^{3}$ by
substracting $\Gamma $ which we call \textit{absolute plane.} If $\Gamma $
involves a pair of complex-conjugate straight lines $l_{1}$ and $l_{2},$
so-called the \textit{absolute lines, }then the obtained affine space becomes an 
\textit{isotropic space} $\mathbb{I}^{3},$ where the triple $\left( \Gamma
,l_{1},l_{2}\right) $ is known as the \textit{absolute figure} of $\mathbb{I}^{3}$.

Let a quadruple $\left( \tilde{t}:\tilde{x}:\tilde{y}:\tilde{z}\right) $ be the
projective coordinates, i.e. $\left( \tilde{t}:\tilde{x}:\tilde{y}:\tilde{z}%
\right) \neq (0:0:0:0).$ Then $\Gamma $ and $l_{1},l_{2}$ are respectively parameterized by $\tilde{t}=0$
and $\tilde{t}=\tilde{x}\pm i\tilde{y}=0.$ The intersection
point of $l_{1}$ and $l_{2}$ is said to be \textit{absolute}, i.e. $\left(
0:0:0:1\right) .$

We will be interested in an affine model of $\mathbb{I}^{3}.$ Thus, by means of the affine coordinates $x=\frac{\tilde{x}}{\tilde{t}},$ $y=%
\frac{\tilde{y}}{\tilde{t}},$ $z=\frac{\tilde{z}}{\tilde{t}},$ $\tilde{t}%
\neq 0,$ group of motions of $\mathbb{I}^{3}$ turns to a six-parameter group given by 
\begin{equation}
\left( x,y,z\right) \longmapsto \left( x^{\prime },y^{\prime },z^{\prime
}\right) :\left\{ 
\begin{array}{l}
x^{\prime }=a+x\cos \theta -y\sin \theta , \\ 
y^{\prime }=b+x\sin \theta +y\cos \theta , \\ 
z^{\prime }=c+dx+ey+z,%
\end{array}%
\right.  \tag{2.1}
\end{equation}%
where $a,b,c,d,e,\theta \in \mathbb{R}.$ The metric invariants of $\mathbb{I}%
^{3}$ under $\left( 2.1\right) ,$ such as \textit{isotropic distance} and 
\textit{angle}, are Euclidean invariants in the Cartesian plane.

A line is said to be \textit{isotropic} provided its point at infinity agrees with the absolute point. In the affine model, those correspond to the lines parallel to $z-$axes. Otherwise, it is called \textit{non-isotropic line}.

The plane involving an isotropic line is said to be \textit{isotropic} and then its line at infinity involves the absolute point. Otherwise it is called \textit{non-isotropic plane}. For example; the equation $ax+by+cz=0$ $\left(a,b,c \in \mathbb{R}, \text{ } c \neq 0 \right)$ determines a non-isotropic plane while the equation $ax+by=0$ determine an isotropic plane.

A unit speed curve has the form 
\begin{equation*}
\alpha :I\subseteq \mathbb{R\longrightarrow I}^{3},\text{ }s\longmapsto
\left( f\left( s\right) ,g\left( s\right) ,h\left( s\right) \right) ,\text{ } \left( f^{\prime } \right ) ^{2}+\left( g^{\prime } \right) ^{2}=1,
\end{equation*}%
where the derivative with respect to $s$ is denoted by a prime and thus the {\it curvature} and {\it torsion}
are respectively given by%
\begin{equation*}
\kappa =\sqrt{\left( f^{\prime \prime }\right) ^{2}+\left( g^{\prime \prime
}\right) ^{2}} \text{ or } \kappa=f^{\prime }g^{\prime \prime }-f^{\prime \prime }g^{\prime }%
\end{equation*}
and 
\begin{equation*}
\tau =\frac{\det \left( \alpha ^{\prime },\alpha ^{\prime \prime
},\alpha ^{\prime \prime \prime }\right) }{\kappa ^{2}} \text{,  } \kappa \neq 0.
\end{equation*}
A curve that lies in an isotropic (resp. non-isotropic) plane is said to be 
\textit{isotropic planar} (resp. \textit{non-isotropic planar}). Otherwise we call it \textit{space curve} and then $\tau \neq 0.$

Let $M^{2}$ be an admissible surface immersed in $\mathbb{I}^{3}$. Then tangent plane $T_{p}\left(
M^{2}\right) $ at each point $p \in M^{2}$ is non-isotropic and thus basically has Euclidean metric. For such a surface, the components $E,F,G$ of the first
fundamental form is obtained by the metric on $M^{2}$ induced from $\mathbb{I}^{3}.$ 

The unit isotropic direction $U=(0,0,1)$ is assumed to be normal vector field of $M^{2}$. Indeed it is
orthogonal to all tangent vectors in $T_{p}\left( M^{2}\right) $ for each $p\in M^{2}.$ Hence the
components of second fundamental form are computed with respect to $U$, namely
\begin{equation*}
l=\frac{\det \left( r_{xx},r_{x},r_{y}\right) }{\sqrt{%
EG-F^2}},\text{ }m=\frac{\det \left( r_{xy},r_{x},r_{y}\right) }{\sqrt{%
EG-F^2}},\text{ }n=\frac{\det \left( r_{yy},r_{x},r_{y}\right) }{\sqrt{%
EG-F^2 }}, 
\end{equation*}
where $r(x,y)$ is a local parameterization on $M^{2}$.

The \textit{isotropic Gaussian} (so-called \textit{relative})
and \textit{mean curvature} are respectively defined by%
\begin{equation*}
K=\frac{ln-m^{2}}{EG-F^{2}},\text{ }H=\frac{En-2Fm+Gl}{2\left(
EG-F^{2}\right) }.
\end{equation*}%
A surface for which $H$ (resp. $K$) vanishes identically is said to be \textit{isotropic
minimal} (resp. \textit{isotropic flat}). Morever, a surface is said to have \textit{CIMC} (resp. \textit{CIGC}) if $H$  (resp. $K$) is a constant function on whole surface.

In order to provide convenience in the calculations, we shall denote nonzero constants by $c_{1},c_{2},...$ and some constants by $d_{1},d_{2},...$, $(i=1,2,...)$ throughout the paper, unless otherwise stated.

\section{Categorisation of  Translation Surfaces}
There are two different types of planes in $\mathbb{I}^{3}$ and thus the translation surfaces of the form (1.1) can be described in terms of translating curves as below:

\begin{description}
\item [Type I] $\alpha $ and $\beta $ are planar.

\begin{description}
\item[Type I.1] $\alpha $ and $\beta $ are isotropic planar.

\item[Type I.2] $\alpha $ is isotropic planar and $\beta $ is non-isotropic
planar.

\item[Type I.3] $\alpha $ and $\beta $ are non-isotropic planar.
\end{description}

\item[Type II] $\alpha $ is isotropic planar and $\beta $ is space curve.

\item [Type III] $\alpha $ is non-isotropic planar and $\beta $ is space curve.

\item[Type IV] $\alpha $ and $\beta $ are space curves.
\end{description}

A surface which belongs to one Type is no equivalent to that of another Type up to the isotropic metric. In the particular case the planes are orthogonal, the surfaces of Type I can be locally given by
\begin{description}
\item[Type I.1*] Both translating curves are isotropic planar ($x=0$ and $y=0)$
\begin{equation*}
r\left( x,y\right) =\left( x,y,f\left( x\right) +g\left(
y\right) \right).
\end{equation*} 

\item[Type I.2*] One translating curve is non-isotropic planar $(z=0)$ and other one isotropic planar ($x=0$)
\begin{equation*}
r\left( x,z\right) =\left( x,f\left( x\right) +g\left(
z\right) ,z\right) .
\end{equation*} 

\item[Type I.3*] Both translating curves are non-isotropic planar ($y-z= \pi$ and $y+z= \pi$)
\begin{equation*}
r\left( y,z\right) =\frac{1}{2}\left( f\left( y\right)
+g\left( z\right) ,y-z+\pi ,y+z\right) .
\end{equation*} 
\end{description}
These surfaces with CIMC and CIGC were obtained in \cite{18,27}. 

If the planes are not orthogonal, according to above categorisation, the surfaces of Type I are so-called \textit{affine translation surfaces}. Let $\left (a_{ij} \right )$, $i,j=1,2$, be a non-orthogonal real matrix and $\det\left (a_{ij} \right ) \neq 0$. More generally the surfaces of Type I.1 are of the form
\begin{equation}
r\left( u,v\right) =\left( \frac{a_{22}u}{\det \left( a_{ij}\right) }-\frac{a_{12}v}{\det \left( a_{ij}\right) },-\frac{a_{21}u}{\det \left( a_{ij}\right) }+\frac{a_{11}v}{\det \left( a_{ij}\right) },f\left( u \right) +g\left( v\right) \right), \tag {3.1}
\end{equation}
After a coordinate change, (3.1) turns to
\begin{equation}
r\left( x,y\right) =\left( x,y,f\left( a_{11}x+a_{12}y \right) +g\left( a_{21}x+a_{22}y\right) \right).
\tag {3.2}
\end{equation}
Such surfaces with CIMC and CIGC, which we call \textit {affine translation surface of first kind}, were presented in \cite{1}. In this paper, we shall be interested in the surfaces of Type I.2-Type III.

Furthermore, in a special case that one curve is isotropic planar ($x=0$) and another one without condition, the surfaces with CIMC and CIGC were provided in \cite{2}.

\section{Surfaces of  Type I.2 and Type I.3 with Constant Curvature}

Denote $\left( a_{ij}\right) $ a non-singular real matrix and $\omega=\det \left( a_{ij}\right) \neq 0,$ $i,j=1,2.$ Then we can consider the translation surface, generated by planar curves, of the form
\begin{equation}
r\left( u,v\right) =\left(  \frac{a_{22}u}{\omega }-\frac{a_{12}v}{\omega },f\left( u \right)+g\left(v \right) ,-\frac{a_{21}u}{\omega }+\frac{a_{11}v}{\omega }\right),   \tag{4.1}
\end{equation}
where the translating curves and the planes involving them are given by
\begin{equation*}
\alpha \left( u\right) =\left( \frac{a_{22}u}{\omega },f\left( u\right) ,-\frac{a_{21}u}{\omega }\right), \text{ } \Gamma _{\alpha}:a_{21}x+a_{22}z=0
\end{equation*}
and 
\begin{equation*}
\beta \left( v\right) =\left( -\frac{a_{12}v}{\omega },g\left( v\right) ,\frac{a_{11}v}{\omega }\right), \text{ } \Gamma _{\beta}:a_{11}x+a_{12}z=0.
\end{equation*}
\begin{remark}
\textit {Since the roles of $f$ and $g$ are symmetric we shall only discuss the cases depending on $f$ throughout the section.} 
\end{remark}

We need to state the following items for the surface of the form (4.1):
\begin{itemize}
\item $\Gamma _{\alpha}$ becomes orthogonal to $\Gamma _{\beta}$ when $\left( a_{ij}\right) $ is an orthogonal matrix.

\item If $a_{12}=0$, then $\Gamma _{\alpha} $ (resp. $\Gamma _{\beta} )$ is a non-isotropic plane (resp. isotropic plane). Thereby the obtained
surface belongs to Type I.2.

\item Otherwise, i.e. $a_{12} \neq 0$, $\Gamma _{\alpha} $ and $\Gamma _{\beta} $ are non-isotropic planes and the \linebreak obtained surface belongs to Type I.3. 
\end{itemize}

If we apply a coordinate change in (4.1) as follows:
\begin{equation}
u =a_{11}x+a_{12}z,\text{ \ }v
=a_{21}x+a_{22}z,  \tag{4.2}
\end{equation}
where $\left( a_{ij}\right) $ is no orthogonal matrix, then it turns to
\begin{equation}
r\left( x,z\right) =\left( x,f\left( u \right) +g\left(
v \right) ,z\right). \tag{4.3}
\end{equation}

The transformations of the form (4.2) are usually called \textit{affine parameter \linebreak coordinates}. Since (4.3) is no equivalent to (3.2) up to the isotropic metric, it may be called \textit{affine translation surface of second kind}. The positive side of this notion is to express the surfaces of Type I.2 and Type I.3 into one format. Also, it seems surprising that only two different kinds of affine translation surfaces appear despite the fact that there are three different kinds of translation surfaces generated by planar curves.

Now we purpose to obtain the surfaces of the form (4.3) with CIGC. For this, the regularity of $\left( 4.3\right) $ implies that 
\begin{equation*}
a_{12}f^{\prime}+a_{22}g^{\prime }\neq 0,\text{ } f^{\prime }=\frac{df}{du}, \text{ } g^{\prime }=\frac{dg}{dv}.
\end{equation*}
By a calculation, $K$ turns to%
\begin{equation}
K=\frac{\omega ^{2}f^{\prime \prime }g^{\prime \prime }}{\left( a_{12}f^{\prime }+a_{22}g^{\prime }\right)
^{4}}.  \tag{4.4}
\end{equation}
It is seen from (4.4) that $K$ vanishes identically provided $f^{\prime \prime }=0$, namely the surface is a \textit{generalized cylinder} (or \textit {cylindrical surface}) with non-isotropic rulings. So, next result can be stated in order for $K$ to be a non-vanishing constant:
\begin{theorem}
For a translation surface in $\mathbb{I}^{3}$ of the form (4.3) with nonzero CIGC $(K_{0})$, up to suitable translations and constants, the following holds:
\begin{equation*}
f\left( u\right) =c_{1}x^{2},\text{ }g\left( v\right) =c_{2}v^{\frac{2}{3}},
\end{equation*}%
where $(u,v)$ is the affine parameter coordinates given by $\left( 4.2\right) .$
\end{theorem}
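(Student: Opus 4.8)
The plan is to convert the requirement $K\equiv K_0\neq 0$ in (4.4) into the single master equation
\[
\omega^2 f''(u)\, g''(v) = K_0\,\bigl(a_{12}f'(u)+a_{22}g'(v)\bigr)^4,
\]
and to record at once that neither $f''$ nor $g''$ may vanish, for otherwise $K_0=0$. The argument then bifurcates according to whether the coefficient $a_{12}$ appearing in the denominator of (4.4) is zero, i.e. according to whether the surface is genuinely of Type~I.3 or collapses to Type~I.2.

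First I would differentiate the master equation with respect to $u$ and use the master equation itself to eliminate the surviving factor $(a_{12}f'+a_{22}g')^3$. After cancelling $g''\neq 0$, this reduces to the clean relation
\[
f'''(u)\bigl(a_{12}f'(u)+a_{22}g'(v)\bigr) = 4a_{12}\bigl(f''(u)\bigr)^2 .
\]
The hard part is the genuine Type~I.3 case $a_{12}a_{22}\neq 0$: here I would rewrite this identity as $a_{22}\,g'(v)\,f'''(u)=4a_{12}(f''(u))^2-a_{12}f'''(u)f'(u)$, whose left side is a product of a function of $v$ with a function of $u$, while the right side depends on $u$ alone. Since $g'$ is non-constant (because $g''\neq0$), evaluating at a point where $f'''\neq0$ would force $g'$ to be constant, a contradiction; hence $f'''\equiv 0$. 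Feeding $f'''=0$ back into the relation leaves $4a_{12}(f'')^2=0$, which is impossible as $a_{12}\neq0$ and $f''\neq0$. Thus no surface of genuine Type~I.3 can carry nonzero constant curvature, and I expect this elimination to be the real content of the theorem.

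It then remains to handle $a_{12}=0$ (the Type~I.2 case, with $a_{22}\neq0$ forced by $\omega\neq0$). The master equation now factorises as $\tfrac{\omega^2}{a_{22}^4}\,f''(u)\cdot\tfrac{g''(v)}{(g'(v))^4}=K_0$, a product of a function of $u$ and a function of $v$ equal to a nonzero constant, so each factor is constant. The factor $f''=\text{const}$ integrates to $f(u)=c_1u^2$ up to an affine term absorbed by a translation and an additive constant; the factor $g''/(g')^4=\text{const}$ is a first-order ODE for $g'$ whose solution is $g'(v)=(c\,v+d)^{-1/3}$, and one further integration yields $g(v)=c_2 v^{2/3}$ after the same normalisations. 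By the symmetry of $f$ and $g$ recorded in Remark~4.1, the alternative degenerate case $a_{22}=0$ merely interchanges the two roles and produces the same surface, which completes the classification.
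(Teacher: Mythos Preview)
Your proof is correct and follows the same overall strategy as the paper: differentiate (4.4) in $u$, split on whether $a_{12}=0$, solve the resulting separated ODEs when $a_{12}=0$, and derive a contradiction when $a_{12}\neq 0$. Your handling of the case $a_{12}a_{22}\neq 0$ is in fact a touch slicker than the paper's---you substitute the master equation back into its $u$-derivative to obtain $f'''(a_{12}f'+a_{22}g')=4a_{12}(f'')^2$ and separate variables immediately, whereas the paper rewrites (4.5) as (4.7) and then differentiates twice more (once in $v$, once in $u$) to reach the same contradiction.
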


\begin{proof}
Since $K_{0} \neq 0$ in (4.4), we have $f^{\prime \prime }g^{\prime \prime }\neq 0.$ The partial derivative of $\left( 4.4\right) $ with respect to $u$ gives%
\begin{equation}
\frac{4K_{0}}{\omega ^{2}}\left( a_{12}f^{\prime }+a_{22}g^{\prime }\right)
^{3}\left( a_{12}f^{\prime \prime }\right) =f^{\prime \prime \prime
}g^{\prime \prime }.  \tag{4.5}
\end{equation}%
We distinguish two cases to solve $\left( 4.5\right)$:
\begin{itemize}
\item $a_{12}=0$. Then $a_{11}a_{22}\neq 0$ due to $\omega \neq 0$. By (4.5) we have $f^{\prime \prime }=c_{1}$ and thus $\left( 4.4\right) $ follows
\begin{equation}
\frac{K_{0}a_{22}^{2}}{c_{1}a_{11}^{2}}=\frac{g^{\prime \prime }}{\left(
g^{\prime }\right) ^{4}}.  \tag{4.6}
\end{equation}%
By solving $\left(
4.6\right) $ we obtain%
\begin{equation*}
g\left( v\right) =\frac{-c_{1}a_{11}^{2}}{2K_{0}a_{22}^{2}}\left( -\frac{%
3K_{0}a_{22}^{2}}{c_{1}a_{11}^{2}}v+d_{2}\right) ^{\frac{2}{3}}+d_{3}, 
\end{equation*}%
which gives the hypothesis of the theorem.

\item $a_{12} \neq 0.$ By symmetry,
we deduce $a_{22} \neq 0.$ Then $\left(
4.5\right) $ can be arranged as%
\begin{equation}
\frac{\left( a_{12}f^{\prime }+a_{22}g^{\prime }\right) ^{3}}{g^{\prime
\prime }}=\frac{\omega ^{2}}{4K_{0}a_{12}}\left( \frac{f^{\prime \prime
\prime }}{f^{\prime \prime }}\right) .  \tag{4.7}
\end{equation}%
The partial derivative of $\left( 4.7\right) $ with respect to $v$ leads to%
\begin{equation}
3a_{22}\left( g^{\prime \prime }\right) ^{2}-\left( a_{12}f^{\prime
}+a_{22}g^{\prime }\right) g^{\prime \prime \prime }=0,  \tag{4.8}
\end{equation}%
where $g^{\prime \prime \prime } \neq 0$ due to $a_{22} g^{\prime \prime } \neq 0$. After taking partial derivative of $\left( 4.8\right) $ with respect to $u$
we immediately achieve a contradiction.
\end{itemize} 
\end{proof}

By a calculation, the isotropic mean curvature is%
\begin{equation}
H=-\frac{\left[ a_{12}^{2}+\left( \omega g^{\prime }\right) ^{2}\right]
f^{\prime \prime }+\left[ a_{22}^{2}+\left( \omega f^{\prime }\right) ^{2}%
\right] g^{\prime \prime }}{2\left( a_{12}f^{\prime }+a_{22}g^{\prime
}\right) ^{3}}. \tag{4.9}
\end{equation}
First, we discuss the minimality case via the following result:
\begin{theorem}
For an isotropic minimal translation surface in $\mathbb{I}^{3}$ of the form (4.3), up to suitable translations and constants, one of the following occurs:

\begin{enumerate}
\item[(a)] it is a non-isotropic plane;

\item[(b)] 
$f\left( u \right) =\frac{1}{c_{1}}\log \left\vert \cos \left (c_{2}%
x \right )\right\vert$, $g\left( v\right) =\frac{-1}{c_{1}}\log \left\vert c_{1}v\right\vert$;

\item[(c)] 
$f\left( u\right) =\frac{1}{c_{1}}\log \left\vert \cos \left( c_{2}u\right)
\right\vert$, $g\left( v\right) =\frac{-1}{c_{1}}\log \left\vert \cos
\left( c_{3}v\right) \right\vert$,
 \end{enumerate}
where $(u,v)$ is the affine parameter coordinates given by $\left( 4.2\right) .$
\end{theorem}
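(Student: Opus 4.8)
The starting point is formula (4.9): minimality $H\equiv 0$ is equivalent to the vanishing of its numerator, i.e.
\[
[a_{12}^{2}+(\omega g')^{2}]\,f''+[a_{22}^{2}+(\omega f')^{2}]\,g''=0 .
\]
The crucial observation is that this has the separated shape $A(v)\,f''+B(u)\,g''=0$, where $A=a_{12}^{2}+\omega^{2}(g')^{2}$ depends only on $v$ and $B=a_{22}^{2}+\omega^{2}(f')^{2}$ depends only on $u$. So the plan is first to dispose of the cases in which a second derivative vanishes, and then to separate variables in the remaining case and integrate the resulting decoupled pair of ODEs.

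First I would treat the degenerate alternative $f''\equiv 0$. Then the displayed equation reduces to $[a_{22}^{2}+\omega^{2}(f')^{2}]\,g''=0$. The factor $a_{22}^{2}+\omega^{2}(f')^{2}$ can vanish only if $a_{22}=0$ and $f'=0$, which forces $a_{12}f'+a_{22}g'=0$ and contradicts the regularity condition; hence $g''\equiv 0$ as well. By the symmetry of Remark 4.1 the same conclusion follows from $g''\equiv 0$, so either both second derivatives vanish or neither does. When both vanish, $f$ and $g$ are affine, so $f(u)+g(v)$ is an affine function of $(x,z)$ whose coefficient of $z$ equals $a_{12}f'+a_{22}g'\neq 0$; by the plane criterion of Section 2 its graph is a non-isotropic plane, which is alternative (a).

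In the remaining (generic) case $f''g''\neq 0$, dividing the numerator equation by $f''g''$ yields
\[
\frac{a_{22}^{2}+\omega^{2}(f')^{2}}{f''}=-\,\frac{a_{12}^{2}+\omega^{2}(g')^{2}}{g''} .
\]
The left-hand side depends only on $u$ and the right-hand side only on $v$, so both equal a constant $c$, and the argument of the previous paragraph shows $c\neq 0$ (otherwise $a_{22}^{2}+\omega^{2}(f')^{2}=0$ would force $f''=0$). This produces the decoupled equations $c\,f''=a_{22}^{2}+\omega^{2}(f')^{2}$ and $-c\,g''=a_{12}^{2}+\omega^{2}(g')^{2}$, each a separable first-order ODE in $f'$ (resp. $g'$). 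When the constant term is nonzero ($a_{22}\neq 0$, resp. $a_{12}\neq 0$), integration gives $f'=(a_{22}/\omega)\tan(\,\cdot\,)$ and a further integration produces a $\log|\cos(\,\cdot\,)|$ term; when the constant term is zero ($a_{22}=0$, resp. $a_{12}=0$) one instead integrates $c\,f''=\omega^{2}(f')^{2}$, obtaining a reciprocal-linear $f'$ and hence a term of the form $\log|\omega^{2}u+cd|$. Matching these against the possible vanishing of $a_{12}$ and $a_{22}$ (recalling $\omega\neq 0$) then gives the three outcomes: the sub-case $a_{12}=0$ (Type I.2, which forces $a_{22}\neq 0$) yields the $\log|\cos|$ / $\log|\text{linear}|$ pair of alternative (b); the sub-case $a_{12}\neq 0$ with $a_{22}\neq 0$ yields the two $\log|\cos|$ factors of alternative (c); and $a_{12}\neq 0$, $a_{22}=0$ reduces to (b) after interchanging $f$ and $g$ via Remark 4.1.

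The main obstacle is not the integration, which is routine, but the organisation of the case analysis: one must use the regularity condition $a_{12}f'+a_{22}g'\neq 0$ both to exclude the possibility that exactly one second derivative vanishes and to confirm that the degenerate surface in (a) is genuinely a non-isotropic plane. A secondary point requiring care is the sign and constant bookkeeping needed to cast the integrated expressions into the precise normalized forms of (b) and (c)—in particular that the two logarithmic-cosine terms in (c) must carry opposite signs, which is forced by $c\,f''>0$ and $c\,g''<0$.
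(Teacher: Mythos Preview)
Your proof is correct and follows essentially the same route as the paper: reduce $H=0$ to the separated equation $[a_{12}^{2}+\omega^{2}(g')^{2}]f''+[a_{22}^{2}+\omega^{2}(f')^{2}]g''=0$, dispose of the $f''=g''=0$ case as the non-isotropic plane, separate variables to obtain the decoupled ODEs, and split according to whether $a_{12}$ (and, by the symmetry of Remark~4.1, $a_{22}$) vanishes. Your treatment is in fact slightly more careful than the paper's, since you explicitly argue via regularity that $f''\equiv 0$ forces $g''\equiv 0$ (the paper passes directly from the planar case to the hypothesis $f''g''\neq 0$), and you justify that the separation constant is nonzero.
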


\begin{proof}
Since $H$ vanishes identically, $\left( 4.9\right) $ reduces to%
\begin{equation}
\left[ a_{12}^{2}+\left( \omega g^{\prime }\right) ^{2}\right] f^{\prime
\prime }+\left[ a_{22}^{2}+\left( \omega f^{\prime }\right) ^{2}\right]
g^{\prime \prime }=0.  \tag{4.10}
\end{equation}%
$f^{\prime \prime }=g^{\prime \prime }=0$ is a solution for $\left( 4.10\right) ,
$ and in this case, the surface is a non-isotropic plane. Suppose that $f^{\prime
\prime }g^{\prime \prime }\neq 0.$ Hence $\left( 4.10\right) $ implies%
\begin{equation}
-\frac{f^{\prime \prime }}{a_{22}^{2}+\left( \omega f^{\prime }\right) ^{2}}%
=c_{1}=\frac{g^{\prime \prime }}{a_{12}^{2}+\left( \omega g^{\prime
}\right) ^{2}}.  \tag{4.11}
\end{equation}%
We have to discuss two cases to solve (4.11).
\begin{itemize}
\item $a_{12}=0.$ Then we have $a_{11}a_{22}\neq 0$ because of $\omega \neq 0$. By solving $\left( 4.11\right) $ we
conclude%
\begin{equation*}
f\left( a_{11}x\right) =\frac{1}{c_{1}a_{11}^{2}a_{22}^{2}}\log \left\vert \cos
\left( c_{1}a_{11}^{2}a_{22}^{2}x+d_{2}\right) \right\vert +d_{3},
\end{equation*}
and%
\begin{equation*}
g\left( v\right) =\frac{-1}{c_{1}a_{11}^{2}a_{22}^{2}}\log \left\vert
c_{1}a_{11}^{2}a_{22}^{2}v+d_{4}\right\vert +d_{5}.
\end{equation*}%
This gives the item (b) of the theorem.

\item $a_{12}\neq 0.$ By symmetry, we get $a_{22}\neq 0$ and
solving $\left( 4.11\right) $ leads to%
\begin{equation*}
f\left( u\right) =\frac{1}{c_{1}\omega ^{2}}\log \left\vert \cos \left(
\omega c_{1}a_{22}u+c_{2}\right) \right\vert +c_{3}
\end{equation*}%
and%
\begin{equation*}
g\left( v\right) =\frac{-1}{c_{1}\omega ^{2}}\log \left\vert \cos \left(
\omega c_{1}a_{12}v+d_{4}\right) \right\vert +d_{5}.
\end{equation*}%
Therefore the proof of the theorem is completed.
\end{itemize}
\end{proof}

\begin{theorem}
For a translation surface in $\mathbb{I}^{3}$ of the form (4.3) with nonzero CIMC $(H_{0})$, up to suitable translations and constants, we have either

\begin{enumerate}
\item[(a)] 
$f\left( u\right) =c_{1}u,\text{ \ }g\left( v \right) =c_{2}x^{2}$, or

\item[(b)] 
$f\left( u\right) =c_{1}u,$ $g\left( v\right) =c_{2}v^{\frac{1}{2}%
}+c_{3}v$, 
\end{enumerate}
where $\left( u,v\right) $ is the affine parameter coordinates given by $%
\left( 4.1\right) .$
\end{theorem}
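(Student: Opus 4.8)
The plan is to impose $H\equiv H_{0}$ in (4.9), clear the denominator, and record the resulting identity
\begin{equation*}
[a_{12}^{2}+(\omega g')^{2}]f''+[a_{22}^{2}+(\omega f')^{2}]g''=-2H_{0}(a_{12}f'+a_{22}g')^{3},
\end{equation*}
which I treat as a relation in the independent affine coordinates $u,v$. If $f''=g''=0$ the left side vanishes while the right side is a nonzero constant multiple of $(a_{12}f'+a_{22}g')^{3}$; since $H_{0}\neq0$ this would force $a_{12}f'+a_{22}g'=0$, contradicting the regularity condition. Hence at least one of $f'',g''$ is nonzero, and the whole theorem reduces to showing that \emph{exactly} one of them vanishes: by Remark 4.1 I may then relabel so that $f''=0$, i.e.\ $f(u)=c_{1}u$, and solve for $g$.

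The main step is therefore to rule out $f''g''\neq0$. Assuming this, I differentiate the displayed identity with respect to $u$ and divide by $f''$; differentiating the result with respect to $v$ and dividing by $g''$ removes the cubic right side, leaving a relation that is linear in the logarithmic derivatives $\phi:=f'''/f''$ and $\psi:=g'''/g''$. A further pair of derivatives in $u$ and $v$ separates the variables and yields $\phi'/f''=-\psi'/g''=:\lambda$, a constant. The delicate part is the case analysis on $\lambda$, and I expect it to be the main obstacle.

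If $\lambda=0$, then $\phi,\psi$ are constant, so $f''$ and $g''$ are exponential in $u$ and $v$ (or polynomial when the relevant constant is zero). Substituting back into the displayed identity and matching like terms — exponentials when $a_{12}a_{22}\neq0$, pure powers of $f'$ or $g'$ when one of $a_{12},a_{22}$ vanishes — always produces a top-order term with nonzero coefficient and no counterpart on the opposite side, a contradiction. If $\lambda\neq0$, integrating the separated relations gives $\phi=\phi_{0}+\lambda f'$ and $\psi=\psi_{0}-\lambda g'$ with $\phi_{0},\psi_{0}$ determined by $H_{0}$ and the $a_{ij}$, whence $f''$ and $g''$ are quadratic polynomials in $f'$ and $g'$. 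Re-inserting these into the $u$-differentiated identity and comparing coefficients as polynomials in $f'$ and $g'$ forces the coefficient $-6H_{0}a_{12}^{3}$ of $(f')^{2}$ to vanish, hence $a_{12}=0$; feeding this back into the undifferentiated identity then forces the coefficient $-2H_{0}a_{22}^{3}$ of $(g')^{3}$ to vanish, hence $a_{22}=0$. Both vanishing contradicts $\omega\neq0$, so $f''g''\neq0$ is impossible.

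It remains to finish under $f''=0$ (legitimate after the symmetry reduction), so $f'=c_{1}$ and $f(u)=c_{1}u$ up to a constant, with $g''\neq0$. The identity then collapses to the separable first-order equation $[a_{22}^{2}+(\omega c_{1})^{2}]\,g''=-2H_{0}(a_{12}c_{1}+a_{22}g')^{3}$ for $g'$. If $a_{22}=0$ its right side is a nonzero constant, so $g''$ is a nonzero constant and $g(v)=c_{2}v^{2}$, giving (a). If $a_{22}\neq0$, integrating once gives $g'$ proportional to a negative square root of a linear function of $v$, and a second integration yields $g(v)=c_{2}v^{1/2}+c_{3}v$, giving (b). Collecting the two possibilities produces the stated classification.
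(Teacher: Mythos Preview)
Your argument is correct and runs parallel to the paper's through the derivation of the key relation $g'\phi+f'\psi=D(a_{12}f'+a_{22}g')$ (the paper's (4.14)), which you also obtain by successive $u$- and $v$-differentiation and division by $f''$, $g''$. The endgame under $f''=0$ is likewise identical to the paper's (4.13).

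Where you genuinely diverge is in excluding $f''g''\neq0$. The paper splits first on $a_{12}$: for $a_{12}=0$ it separates (4.14) as $f'''/(f'f'')=-g'''/(g'g'')=d_{1}$, rewrites $f''=c_{1}e^{d_{1}f}$, $g''=c_{2}e^{-d_{1}g}$, substitutes into (4.9), and then changes independent variables to $f$ and $g$ to force a contradiction; for $a_{12}\neq0$ it reaches your formulas $\phi=Da_{22}-d_{1}f'$, $\psi=Da_{12}+d_{1}g'$, feeds the second into the $v$-derivative of (4.9), and differentiates twice more in $v$. You instead take one further mixed derivative of (4.14) to isolate the single constant $\lambda=\phi'/f''=-\psi'/g''$, which together with the forced values $\phi_{0}=Da_{22}$, $\psi_{0}=Da_{12}$ handles all values of $a_{12},a_{22}$ uniformly. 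Your contradiction then comes from reading off the $(f')^{2}$ coefficient in the $u$-differentiated identity and the $(g')^{3}$ coefficient in the original one after substituting the quadratic (or exponential, when $\lambda=0$) expressions for $f''$ and $g''$. This coefficient-matching is a bit cleaner and avoids the paper's somewhat ad hoc passage to $f,g$ as independent variables in the $a_{12}=0$ branch; the cost is that you must separately dispatch the $\lambda=0$ sub-case, whereas the paper's (4.20) absorbs $d_{1}=0$ without further comment.
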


\begin{proof}
The partial derivative of $\left( 4.9\right) $ with
respect to $u$ and $v$ yields%
\begin{equation}
-6H_{0}\omega ^{-2}a_{12}a_{22}\left( a_{12}f^{\prime }+a_{22}g^{\prime
}\right) \left( f^{\prime \prime }g^{\prime \prime }\right) =g^{\prime
}g^{\prime \prime }f^{\prime \prime \prime }+f^{\prime }f^{\prime \prime
}g^{\prime \prime \prime }.  \tag{4.12}
\end{equation}%
The situation for which both $f^{\prime \prime }$ and $g^{\prime \prime }$ vanish is a solution for $\left( 4.12\right) ,$ however we omit this one since $%
H_{0}\neq 0.$ We distinguish the remaining cases:
\begin{itemize}
\item $f=c_{1}u+d_{1}$ and $g^{\prime \prime }\neq 0$.  This assumption is a solution for (4.12). Then from $\left(
4.9\right) ,$ we derive%
\begin{equation}
\frac{g^{\prime \prime }}{\left( a_{12}c_{1}+a_{22}g^{\prime }\right) ^{3}}=%
\frac{-2H_{0}}{a_{22}^{2}+\left( \omega c_{1}\right) ^{2}}.  \tag{4.13}
\end{equation}%
We have two cases:
\begin{itemize}
\item[\textbf{(1)}] $a_{22}=0.$ Solving $\left( 4.13\right) $ leads to%
\begin{equation*}
g\left( a_{21}x\right) =-H_{0}a_{12}c_{1}x^{2}+d_{2}x+d_{3},
\end{equation*}%
where $a_{12}a_{21}\neq 0$ due to $\omega \neq 0.$ This implies the item
(a) of the theorem.

\item[\textbf{(2)}] $a_{22}\neq 0.$ By symmetry we have $a_{12}\neq 0.$ Solving $%
\left( 4.13\right) $ gives%
\begin{equation*}
g\left( v\right) =\frac{a_{22}^{2}+\left( \omega c_{1}\right) ^{2}}{%
2H_{0}a_{22}^{2}}\left( \frac{4H_{0}a_{22}}{a_{22}^{2}+\left( \omega
c_{1}\right) ^{2}}v+d_{4}\right) ^{\frac{1}{2}}-\frac{a_{12}c_{1}}{a_{22}}%
v+d_{5},
\end{equation*}%
which proves the item (b) of the theorem.
\end{itemize}

\item $f^{\prime \prime }g^{\prime \prime }\neq 0.$ By dividing 
$\left( 4.12\right) $ with $f^{\prime \prime }g^{\prime \prime },$ one can
be rewritten as%
\begin{equation}
-6H_{0}\omega ^{-2}a_{12}a_{22}\left( a_{12}f^{\prime }+a_{22}g^{\prime
}\right) =g^{\prime }\frac{f^{\prime \prime \prime }}{f^{\prime \prime }}%
+f^{\prime }\frac{g^{\prime \prime \prime }}{g^{\prime \prime }}.  \tag{4.14}
\end{equation}%
We have again cases:
\begin{itemize}
\item[(\textbf{1)}] $a_{12}=0.$ $\omega \neq 0$ implies $a_{11}a_{22}\neq 0.$ $%
\left( 4.14\right) $ turns to%
\begin{equation*}
\frac{f^{\prime \prime \prime }}{f^{\prime }f^{\prime \prime }}=d_{1}=-\frac{%
g^{\prime \prime \prime }}{g^{\prime }g^{\prime \prime }},
\end{equation*}%
which gives that $f^{\prime \prime }=c_{1}e^{d_{1}f}$ and $g^{\prime
\prime }=c_{2}e^{-d_{1}g}$. By substituting those in $\left( 4.9\right) $ we derive
\begin{equation}
-2H_{0}a_{22}\left( g^{\prime }\right) ^{3}=c_{1}a_{11}^{2}\left( g^{\prime }\right)
^{2}e^{d_{1}f}+\left[ c_{2}+c_{2}a_{11}^{2}\left( f^{\prime }\right) ^{2}\right]
e^{-d_{1}g}.  \tag{4.15}
\end{equation}%
Put $f^{\prime }=p$ and $g^{\prime }=q$ in (4.15). Then taking partial derivative of $%
\left( 4.15\right) $ with respect to $f$ yields%
\begin{equation}
0=d_{1}c_{1}q^{2}e^{d_{1}f}+2c_{2}pp^{\prime }e^{-d_{1}g},  \tag{4.16}
\end{equation}%
where $p^{\prime }=\frac{dp}{df}=\frac{f^{\prime \prime }}{f^{\prime }}.$ If 
$d_{1}=0$ in $\left( 4.16\right) $ then we obtain the contradiction $%
p^{\prime }=0.$ Otherwise we have 
\begin{equation}
\frac{d_{1}c_{1}e^{d_{1}f}}{2c_{2}pp^{\prime }}=c_{3}=-\frac{e^{-d_{1}g}}{%
q^{2}}.  \tag{4.17}
\end{equation}%
By substituting the second equality in $\left( 4.17\right) $ into $\left(
4.15\right) ,$ we conclude%
\begin{equation}
-2H_{0}a_{22}q\left( g\right) =c_{1}a_{11}^{2}e^{d_{1}f}-c_{2}c_{3}\left[ 1+a_{11}^{2}p\left(
f\right) ^{2}\right] .  \tag{4.18}
\end{equation}%
The left side in $\left( 4.18\right) $ is a function of $g$ however other
side is a function of $f.$ This is not possible.

\item[\textbf{(2)}] $a_{12}\neq 0$ in $\left( 4.14\right) .$ The symmetry
implies $a_{22}\neq 0.$ By dividing $\left( 4.14\right) $ with $f^{\prime
}g^{\prime },$ we write%
\begin{equation}
D\left( \frac{a_{12}}{g^{\prime }}+\frac{a_{22}}{f^{\prime }}\right) =\frac{%
f^{\prime \prime \prime }}{f^{\prime }f^{\prime \prime }}+\frac{g^{\prime
\prime \prime }}{g^{\prime }g^{\prime \prime }},  \tag{4.19}
\end{equation}%
where $D=-6H_{0}\omega ^{-2}a_{12}a_{22}.$ $\left( 4.19\right) $ follows%
\begin{equation}
f^{\prime \prime \prime }=\left( -d_{1}f^{\prime }+Da_{22}\right) f^{\prime
\prime }\text{ and }g^{\prime \prime \prime }=\left( d_{1}g^{\prime
}+Da_{12}\right) g^{\prime \prime }.  \tag{4.20}
\end{equation}%
On the other hand by taking partial derivative of $\left( 4.9\right) $ with
respect to $v$ and considering the second equality in $\left( 4.20\right) $ we obtain%
\begin{equation}
-2Ha_{22}\left( a_{12}f^{\prime }+a_{22}g^{\prime }\right) ^{2}=2\omega
^{2}g^{\prime }f^{\prime \prime }+\left[ a_{22}^{2}+\left( \omega f^{\prime
}\right) ^{2}\right] \left( d_{1}g^{\prime }+Da_{12}\right) .  \tag{4.21}
\end{equation}%
Taking twice partial derivative of $\left( 4.21\right) $ with respect to $v$
gives the contradiction $g^{\prime \prime }=0.$
\end{itemize}

\end{itemize}

\end{proof}
\section{Surfaces of  Type II with Constant Curvature}

Let $\alpha $ be an isotropic planar curve  and $\beta $ a space curve
given by %
\begin{equation*}
\alpha \left( x\right) =\left( x,ax,f\left( x\right) \right) ,\text{ \ }%
\beta \left( y\right) =\left( y,g\left( y\right) ,h\left( y\right) \right) ,
\end{equation*}%
where $a\in \mathbb{R}$. Since the torsion of $\beta$ is nonzero function, we deduce that%
\begin{equation}
g^{\prime \prime }h^{\prime \prime \prime }-g^{\prime \prime
\prime }h^{\prime \prime }\neq 0,  \tag{5.1}
\end{equation}%
where $g^{\prime }=\frac{dg}{dy},$ $h^{\prime }=\frac{dh}{dy}$ and so on.
Then the obtained translation surface belongs to Type II and has the form%
\begin{equation}
r\left( x,y\right) =\left( x+y,ax+g\left( y\right) ,f\left( x\right)
+h\left( y\right) \right) .  \tag{5.2}
\end{equation}%
The assumption (5.1) ensures the regularity of (5.2), i.e. $g^{\prime
}-a\neq 0.$ Hence, by a calculation, the Gaussian curvature turns to%
\begin{equation}
K=\frac{f^{\prime \prime }\left[ h^{\prime \prime }\left( g^{\prime
}-a\right) -g^{\prime \prime }\left( h^{\prime }-f^{\prime }\right) \right] 
}{\left( g^{\prime }-a\right) ^{3}}, \tag{5.3}
\end{equation}
where $f^{\prime }=\frac{df}{dx}$, etc. 
\begin{theorem}
A translation surface in $\mathbb{I}^{3}$ of the form (5.2) with CIGC ($K_{0}$) is a generalized
cylinder with non-isotropic rulings, i.e. $K_{0}=0$.
\end{theorem}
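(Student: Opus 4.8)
The plan is to argue by contradiction: assume the surface has constant Gaussian curvature $K_0\neq0$ and derive a violation of the space-curve hypothesis (5.1). First I note from (5.3) that if $f''=0$ then $K\equiv0$; hence $f''\neq0$ everywhere, so I may clear the denominator and divide the constant-curvature equation by $f''$ to isolate the factor $K_0(g'-a)^3/f''$ on the left, while the right-hand side collects the $y$-dependent data of $\beta$ together with a single mixed term $g''f'$.

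Next I would differentiate this relation with respect to $x$. Since only the term $g''f'$ carries $x$-dependence on the right, this kills most of the expression and leaves an identity of the schematic form $-K_0(g'-a)^3 f'''/(f'')^2=g''f''$. Dividing once more by $f''$ separates the variables: the combination $-K_0 f'''/(f'')^3$ depends only on $x$, while $g''/(g'-a)^3$ depends only on $y$, so both equal a common constant $d$. This already pins down $g''=d(g'-a)^3$ together with a constant-type relation $f'''/(f'')^3=\mathrm{const}$ for $f$.

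I would then feed $g''=d(g'-a)^3$ back into the curvature equation, cancel one factor of $(g'-a)$, and repeat the separation trick: the surviving $x$-part forces $K_0/f'-df'=k$ for a constant $k$, and the surviving $y$-part yields $h''=(g'-a)^2(k+dh')$. The payoff is the final step. Writing $q=g'-a$ I would compute $q'=g''=dq^3$, hence $g'''=3d^2q^5$, and differentiate $h''=q^2(k+dh')$ to get $h'''=3dq^4(k+dh')$. Substituting these into the torsion quantity gives
\[
g''h'''-g'''h''=(dq^3)\,3dq^4(k+dh')-(3d^2q^5)\,q^2(k+dh')=0,
\]
which contradicts (5.1). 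Therefore $K_0=0$, and the surface is a generalized cylinder with non-isotropic rulings.

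The routine part is the two successive separations of variables; the step I expect to be the real crux is the very last computation, where the constraints accumulated for $f$, $g$, $h$ must be shown to conspire so that the torsion determinant collapses identically to zero. The conceptual point is that demanding a constant nonzero $K$ over-determines the space curve $\beta$ and forces it to be planar, which is exactly what (5.1) forbids. A minor bookkeeping caveat is to treat the degenerate value $d=0$ (where $g''\equiv0$) separately, but there the torsion quantity again vanishes trivially, so no case is lost.
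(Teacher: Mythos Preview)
Your argument is correct and follows the same route as the paper: rewrite (5.3), differentiate in $x$, separate variables to get $g''/(g'-a)^3=-K_0 f'''/(f'')^3=d$, feed this back to obtain $h''=(g'-a)^2(k+dh')$, and conclude that the torsion expression (5.1) collapses. The only difference is cosmetic---the paper integrates the ODEs explicitly to closed forms (5.5)--(5.8) and then observes that $h$ is an affine function of $g$, whereas you compute $g''h'''-g'''h''=0$ directly from the differential relations without integrating; your finish is slightly cleaner. One small typo: the $x$-relation should read $K_0/f''-df'=k$, not $K_0/f'$; it is not used downstream, so the argument is unaffected.
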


\begin{proof}
Assume that $K_{0}=0$. In this case, only possibility is that $f^{\prime\prime }=0$, namely, $\alpha $ is a non-isotropic line. Otherwise, $f^{\prime\prime } \neq 0$, we have $h^{\prime \prime }\left( g^{\prime}-a\right)= g^{\prime \prime }\left( h^{\prime }-f^{\prime }\right)$. Taking partial derivative of this one with respect to $x$ yields the contradiction $g^{\prime \prime}=0$ due to (5.1). If $K_{0} \neq 0$, then (5.3) can be rewritten as%
\begin{equation}
\frac{K_{0}}{f^{\prime \prime }}=\frac{h^{\prime \prime }}{\left( g^{\prime
}-a\right) ^{2}}-\frac{g^{\prime \prime }}{\left( g^{\prime }-a\right) ^{3}}%
\left( h^{\prime }-f^{\prime }\right) .  \tag{5.4}
\end{equation}%
Taking partial derivative of (5.4) with respect to $x$ yields%
\begin{equation*}
-K_{0}\frac{f^{\prime \prime \prime }}{\left( f^{\prime \prime }\right) ^{3}}%
=\frac{g^{\prime \prime }}{\left( g^{\prime }-a\right) ^{3}}
\end{equation*}%
and solving this one%
\begin{equation}
f\left( x\right) =\frac{1}{3c_{1}^{2}}\left( -2c_{1}x+d_{1}\right) ^{\frac{3}{2}%
}+d_{2}x+d_{3}  \tag{5.5}
\end{equation}%
and 
\begin{equation}
g\left( y\right) =\frac{1}{K_{0}c_{1}}\left( 2K_{0}c_{1}y+d_{4}\right) ^{%
\frac{1}{2}}+ay+d_{5}.  \tag{5.6}
\end{equation}%
Substituting (5.5) and (5.6) into (5.4) gives%
\begin{equation}
0=\frac{h^{\prime \prime }}{h^{\prime }+d_{2}}+\frac{K_{0}c_{1}}{%
2K_{0}c_{1}y+d_{4}}.  \tag{5.7}
\end{equation}%
By solving (5.7), we find%
\begin{equation}
h\left( y\right) =\frac{c_{2}}{K_{0}c_{1}}\left( 2K_{0}c_{1}y+d_{4}\right) ^{%
\frac{1}{2}}-d_{2}y+d_{6}.  \tag{5.8}
\end{equation}%
Combining (5.6) with (5.8) gives a contradiction due to (5.1).
\end{proof}

By a direct calculation, the mean curvature is%
\begin{equation}
2H=\frac{\left[ 1+\left( g^{\prime }\right) ^{2}\right] \left( g^{\prime
}-a\right) f^{\prime \prime }+\left( 1+a^{2}\right) \left[ h^{\prime \prime
}\left( g^{\prime }-a\right) -g^{\prime \prime }\left( h^{\prime }-f^{\prime
}\right) \right] }{\left( g^{\prime }-a\right) ^{3}}.  \tag{5.9}
\end{equation}
First we distinguish the minimality case: 

\begin{theorem}
A translation surface in $\mathbb{I}^{3}$ of the form (5.2) cannot be
isotropic minimal.
\end{theorem}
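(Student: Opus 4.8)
The plan is to assume $H\equiv 0$ and derive a contradiction with the space-curve hypothesis (5.1). Setting the numerator of (5.9) to zero and expanding the term $g''(h'-f')$, I would first rewrite the minimality equation in the separated form
\[
A(y)\,f''(x)+B(y)\,f'(x)+C(y)=0,
\]
where $A=[1+(g')^2](g'-a)$, $B=(1+a^2)g''$ and $C=(1+a^2)[h''(g'-a)-g''h']$. The crucial structural observation is that the regularity condition $g'-a\neq 0$, together with $1+(g')^2>0$, forces $A(y)\neq 0$ \emph{everywhere}; this is what makes the subsequent separation of variables clean.

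Next I would differentiate this identity with respect to $x$ to eliminate $C$, obtaining $A f'''+B f''=0$. Two cases arise. If $f''\equiv 0$, then $f'$ is a constant $m$ and the minimality equation collapses to $h''(g'-a)=g''(h'-m)$; differentiating once in $y$ gives $h'''(g'-a)=g'''(h'-m)$, and eliminating $(h'-m)$ between the two relations yields $(g''h'''-g'''h'')(g'-a)=0$, hence $g''h'''-g'''h''=0$, contradicting (5.1). If instead $f''\not\equiv 0$, then dividing $A f'''+B f''=0$ by $A f''$ on the open set where $f''\neq0$ shows that $f'''/f''$ and $-B/A$ equal a common constant $c$; thus $B=-cA$ on the whole $y$-interval. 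Feeding this back into the separated equation and separating variables once more gives $f''-cf'=\mu$ (a constant) and $C=-\mu A$.

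From here the two surviving relations are pure ODEs in $y$, namely $(1+a^2)g''=-c[1+(g')^2](g'-a)$ and $(1+a^2)[h''(g'-a)-g''h']=-\mu[1+(g')^2](g'-a)$. If $c=0$ the first forces $g''\equiv 0$, again contradicting (5.1). For $c\neq0$ I would eliminate the common factor $[1+(g')^2](g'-a)$ between the two ODEs to obtain the algebraic relation $g''(\lambda h'-\mu)=\lambda(g'-a)h''$ with $\lambda=-c$. The final step is to differentiate both ODEs once more and compute $g''h'''-g'''h''$ directly; after substituting the expressions for $g'''$ and $h'''$, every term organizes into the single factor $g''(\lambda h'-\mu)-\lambda(g'-a)h''$, which vanishes by the algebraic relation just derived. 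Hence $g''h'''-g'''h''\equiv 0$, contradicting (5.1) and completing the proof.

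The main obstacle is the last case $c\neq 0$: a naive substitution produces an unwieldy expression for $g''h'''-g'''h''$, and the argument only closes because that expression factors exactly through the relation $g''(\lambda h'-\mu)-\lambda(g'-a)h''=0$. Recognizing and exploiting this factorization — rather than attempting to integrate the ODEs for $g$ and $h$ explicitly — is the key to keeping the computation finite and reaching the torsion contradiction.
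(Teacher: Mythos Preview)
Your argument is correct and follows essentially the same route as the paper: both set the numerator of (5.9) to zero, differentiate in $x$ to obtain $A f'''+B f''=0$, separate variables to get $f'''/f''=c$ and the companion ODE for $g$, and then reduce each case to the torsion relation $g''h'''-g'''h''=0$, contradicting (5.1). The only cosmetic difference is that in the final step the paper integrates $\frac{g''}{g'-a}=\frac{h''}{h'+d_1/c_1}$ to obtain $g$ as an affine function of $h$ (whence the torsion vanishes), whereas you differentiate the same algebraic relation once more and exhibit the factorization directly; both lead to the identical contradiction.
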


\begin{proof}
We prove it by contradiction. If $H=0$, then
(5.9) reduces to%
\begin{equation}
\left[ 1+\left( g^{\prime }\right) ^{2}\right] \left( g^{\prime }-a\right)
f^{\prime \prime }+\left( 1+a^{2}\right) \left[ h^{\prime \prime }\left(
g^{\prime }-a\right) -g^{\prime \prime }\left( h^{\prime }-f^{\prime
}\right) \right] =0.  \tag{5.10}
\end{equation}%
The partial derivative of (5.10) with respect to $x$ yields%
\begin{equation}
\left[ 1+\left( g^{\prime }\right) ^{2}\right] \left( g^{\prime }-a\right)
f^{\prime \prime \prime }+\left( 1+a^{2}\right) g^{\prime \prime }f^{\prime
\prime }=0.  \tag{5.11}
\end{equation}%
We have to discuss two cases:

\begin{itemize}
\item $f^{\prime \prime }=0,$ i.e. $f\left( x\right) =d_{1}x+d_{2}.$ By
(5.10) we deduce%
\begin{equation*}
\frac{h^{\prime \prime }}{h^{\prime }-d_{1}}=\frac{g^{\prime \prime }}{%
g^{\prime }-a},
\end{equation*}%
which implies%
\begin{equation*}
h=c_{1}g+\left( d_{1}-ac_{1}\right) y-d_{3}.
\end{equation*}%
This is not possible due to (5.1).

\item $f^{\prime \prime }\neq 0.$ (5.11) implies%
\begin{equation}
\frac{f^{\prime \prime \prime }}{f^{\prime \prime }}=-\frac{\left(
1+a^{2}\right) g^{\prime \prime }}{\left[ 1+\left( g^{\prime }\right) ^{2}%
\right] \left( g^{\prime }-a\right) },  \tag{5.12}
\end{equation}%
Then (5.12) follows%
\begin{equation}
f^{\prime \prime }=c_{1}f^{\prime }+d_{1},\text{ \ }\left[ 1+\left(
g^{\prime }\right) ^{2}\right] \left( g^{\prime }-a\right) c_{1}=-\left(
1+a^{2}\right) g^{\prime \prime }.  \tag{5.13}
\end{equation}%
By considering (5.13) into (5.10), we conclude%
\begin{equation}
0=\frac{g^{\prime \prime }}{g^{\prime }-a}-\frac{h^{\prime \prime }}{%
h^{\prime }+\frac{d_{1}}{c_{1}}}.  \tag{5.14}
\end{equation}%
Solving (5.14) gives%
\begin{equation*}
g=c_{2}h+\left( a+\frac{c_{2}d_{1}}{c_{1}}\right) y+d_{2},
\end{equation*}%
which is a contradiction due to (5.1).
\end{itemize}
\end{proof}

\begin{theorem}
For a translation surface in $\mathbb{I}^{3}$ of the form (5.2) has nonzero
CIMC ($H_{0}$), up to suitable
translations and constants, one of the following occurs:
\begin{enumerate}
\item[(a)] it is a generalized cylinder with non-isotropic rulings over the curve 
\begin{equation*}
\beta \left( y\right) =\left( y,g\left(
y\right) ,\frac{H_{0}}{1+a^{2}}\left[ g-ay\right] ^{2}+c_{1}y\right) 
\end{equation*}
\item[(b)] $\alpha \left( x\right) =\left( x,ax,c_{1}\exp \left(
c_{2}x\right) \right) $ and $\beta \left( y\right) =\left( y,g\left(
y\right) ,\frac{H_{0}}{1+a^{2}}\left[ g-ay\right] ^{2}\right) ,$
\end{enumerate}
where $g$ is a non-linear function.
\end{theorem}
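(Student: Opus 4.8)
The plan is to impose $H\equiv H_{0}$ (a nonzero constant) in (5.9) and clear the denominator, obtaining the master identity
\[
\left[1+(g')^{2}\right](g'-a)f''+(1+a^{2})\left[h''(g'-a)-g''(h'-f')\right]=2H_{0}(g'-a)^{3}.
\]
Since $f$ depends only on $x$ while $g,h$ depend only on $y$, I would differentiate this with respect to $x$. The right-hand side and the two pure-$h$ terms carry no $x$-dependence and drop out, leaving exactly equation (5.11),
\[
\left[1+(g')^{2}\right](g'-a)f'''+(1+a^{2})g''f''=0.
\]
Thus the nonzero-$H_{0}$ problem begins identically to the minimal one, and I split on whether $f''$ vanishes. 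The whole point of the argument is that the surviving $2H_{0}(g'-a)^{3}$ term, invisible after differentiation, will reappear when I return to the master identity and turn a would-be contradiction into an explicit solution.

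In the case $f''=0$ we have $f(x)=d_{1}x+d_{2}$, so $\alpha$ is a non-isotropic line and $r=\alpha(x)+\beta(y)$ is a generalized cylinder with non-isotropic rulings in the direction $\alpha'=(1,a,d_{1})$. Feeding $f'=d_{1}$ back into the master identity turns it into a first-order linear ODE for $h'$ with $g$-dependent coefficients. Writing $G=g-ay$ (so $G'=g'-a$, $G''=g''$), it becomes $h''G'-G''h'+d_{1}G''=\tfrac{2H_{0}}{1+a^{2}}(G')^{3}$; dividing by $(G')^{2}$ exhibits $\tfrac{d}{dy}\!\left(h'/G'\right)$, and two integrations yield $h=\tfrac{H_{0}}{1+a^{2}}(g-ay)^{2}+($linear in $y$ and $g)$. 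The isotropic shears $z\mapsto z+dx+ey$ from (2.1) add $dy+eg$ to $h$ (and $(d+ea)x$ to $f$); choosing them to cancel the linear-in-$g$ and the superfluous linear-in-$y$ parts puts $\beta$ into the form of item (a), with $g$ an arbitrary non-linear function (the non-linearity being exactly the torsion requirement (5.1)).

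In the case $f''\neq0$ I divide (5.11) by $f''$ and separate variables: $\tfrac{f'''}{f''}$ equals a function of $y$ only, so each side is a constant $c_{2}$; here $c_{2}=0$ would force $g''=0$, contradicting (5.1), so $c_{2}\neq0$. Integrating $f'''=c_{2}f''$ twice gives $f$ of exponential-plus-linear type and simultaneously delivers the relation
\[
\left[1+(g')^{2}\right](g'-a)=-\frac{1+a^{2}}{c_{2}}\,g''.
\]
Substituting this into the master identity, the first term becomes $-\tfrac{1+a^{2}}{c_{2}}g''f''$; combining it with the $g''f'$ concealed in the $h$-bracket shows that the $x$-dependence enters only through $f'-f''/c_{2}$, which is constant. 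Hence the identity again collapses to the same linear ODE for $h'$, solved as before to give $h=\tfrac{H_{0}}{1+a^{2}}(g-ay)^{2}+($linear$)$. Finally the shear parameters $d,e$ can be chosen to annihilate the linear term of $f$ and the linear terms of $h$ at once — the condition $d+ea=-B$ needed for $f$ turns out automatically compatible with the two conditions needed for $h$ — yielding item (b) with $\alpha(x)=(x,ax,c_{1}e^{c_{2}x})$ and $\beta(y)=(y,g(y),\tfrac{H_{0}}{1+a^{2}}[g-ay]^{2})$, where $g$ is the non-linear solution of the displayed auxiliary ODE.

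The hard part will be the case $f''\neq0$: one must carry the $2H_{0}(g'-a)^{3}$ term intact through the substitution and verify that the $x$-dependence genuinely separates into the single constant $f'-f''/c_{2}$. This is precisely the spot where, for the minimal surfaces of Theorem 5.2, the analogous computation forced $g\propto h$ and hence a contradiction with (5.1); the nonzero-curvature term is exactly what rescues it into the quadratic profile for $h$. A secondary point requiring care is the bookkeeping of the isotropic motions (2.1): I must confirm that a single $z$-shear can normalize $f$ and $h$ simultaneously without touching $g$ (which lives in the second coordinate and is unaffected), and that the residual constant $c_{1}$ in item (a) cannot be removed without trivializing the ruling direction, which is why it is retained there but absent in (b).
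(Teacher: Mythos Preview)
Your proposal is correct and follows essentially the same route as the paper: differentiate the mean-curvature equation (5.9) with respect to $x$ to obtain (5.11), split on whether $f''$ vanishes, in the nonvanishing case separate variables to get the exponential form of $f$ together with the auxiliary ODE for $g$, and then substitute back to reduce to a first-order linear ODE for $h'$ whose solution is the quadratic profile in $g-ay$. Your treatment is slightly more explicit than the paper's in two places---you spell out why the separation constant $c_{2}$ cannot vanish (otherwise $g''=0$, contradicting (5.1)) and you verify that a single isotropic shear from (2.1) simultaneously normalizes the linear parts of $f$ and $h$ in case~(b)---whereas the paper simply records the integrated formulas and invokes ``up to suitable translations and constants''.
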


\begin{proof}
We seperate the proof into two cases:

\begin{itemize}
\item $f^{\prime \prime }=0,$ $f\left( x\right) =d_{1}x+d_{2}.$ By
substituting it into $\left( 5.9\right) $ we derive%
\begin{equation}
\frac{2H_{0}}{\left( 1+a^{2}\right) }\left( g^{\prime }-a\right) =\left( 
\frac{h^{\prime }-d_{1}}{g^{\prime }-a}\right) ^{\prime }.  \tag{5.15}
\end{equation}%
Twice integration in $\left( 5.15\right) $ yields%
\begin{equation*}
h=\frac{H_{0}}{\left( 1+a^{2}\right) }\left( g-ay\right) ^{2}+d_{1}y+d_{3},
\end{equation*}%
which implies item (a) of the theorem.

\item $f^{\prime \prime }\neq 0.$ By taking partial derivative of (5.9)
with respect to $x$ we obtain (5.11). It means that next steps will be
similar to that of Theorem 5.2. Thus we have (5.13), namely%
\begin{equation*}
f\left( x\right) =c_{2}\exp \left( c_{1}x\right) +d_{1}x+d_{2}
\end{equation*}%
and 
\begin{equation*}
\text{\ }\left[ 1+\left( g^{\prime }\right) ^{2}\right] \left( g^{\prime
}-a\right) c_{1}=-\left( 1+a^{2}\right) g^{\prime \prime }.
\end{equation*}%
Substituting those into (5.9), we conclude%
\begin{equation}
\frac{2H_{0}}{\left( 1+a^{2}\right) }\left( g^{\prime }-a\right) =\frac{%
d_{1}g^{\prime \prime }}{c_{1}\left( g^{\prime }-a\right) ^{2}}+\left( \frac{%
h^{\prime }}{g^{\prime }-a}\right) ^{\prime }.  \tag{5.16}
\end{equation}%
Twice integration in $\left(5.16\right) $ gives%
\begin{equation*}
h=\frac{H_{0}}{\left( 1+a^{2}\right) }\left( g-ay\right) ^{2}-\frac{d_{1}}{%
c_{1}}y+d_{3}(g-ay)+d_{4},
\end{equation*}%
which completes the proof.
\end{itemize}
\end{proof}

\section{Surfaces of  Type III with Constant Curvature}

Let $\alpha $ be a non-isotropic planar curve  and $\beta $ a space curve
given by %
\begin{equation*}
\alpha \left( x\right) =\left( x,f\left( x\right) ,ax\right) ,\text{ \ }%
\beta \left( y\right) =\left( y,g\left( y\right) ,h\left( y\right) \right) ,
\end{equation*}%
where $a\in \mathbb{R}$. Then the torsion of $\beta$ is nonzero function, namely%
\begin{equation}
g^{\prime \prime }h^{\prime \prime \prime }-g^{\prime \prime
\prime }h^{\prime \prime }\neq 0,  \tag{6.1}
\end{equation}%
where $g^{\prime }=\frac{dg}{dy},$ $h^{\prime }=\frac{dh}{dy}$ and so on.
Hence the surface obtained by a translation of $\alpha$ and $\beta$ belongs to Type III and of the form%
\begin{equation}
r\left( x,y\right) =\left( x+y,f\left( x\right) +g\left( y\right)
,ax+h\left( y\right) \right) .  \tag{6.2}
\end{equation}%
It implies from (6.1) that the surface is regular, i.e. $g^{\prime
}-f^{\prime }\neq 0, f^{\prime }=\frac{df}{dx}.$ By a calculation, the isotropic Gaussian curvature is%
\begin{equation}
K=-\frac{f^{\prime \prime }\left( h^{\prime }-a\right) \left[ h^{\prime
\prime }\left( g^{\prime }-f^{\prime }\right) -g^{\prime \prime }\left(
h^{\prime }-a\right) \right] }{\left( g^{\prime }-f^{\prime }\right) ^{4}}. 
\tag{6.3}
\end{equation}

\begin{theorem}
A translation surface in $\mathbb{I}^{3}$ of the form (6.2) with CIGC ($K_{0}$) is a generalized
cylinder with non-isotropic rulings, namely $K_{0}=0$.
\end{theorem}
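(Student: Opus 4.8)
The plan is to follow the scheme of Theorem 5.1: first settle $K_0=0$, then rule out $K_0\neq 0$ using the torsion hypothesis (6.1). When $K_0=0$, equation (6.3) forces the numerator $f''(h'-a)\left[h''(g'-f')-g''(h'-a)\right]$ to vanish identically. Note $h'\equiv a$ would give $h''=h'''=0$ and hence $g''h'''-g'''h''=0$, violating (6.1), so $h'-a\not\equiv 0$. Thus if $f''\not\equiv 0$ the bracket $h''(g'-f')-g''(h'-a)$ must vanish; differentiating it in $x$ (only the $-f'$ term survives) gives $-h''f''=0$, so $h''=0$ wherever $f''\neq 0$, whence $g''h'''-g'''h''=0$ again contradicts (6.1). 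This forces $f''=0$, the generalized cylinder.

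For $K_0\neq 0$ every factor of the numerator is nonzero, so $f''\neq 0$, $h'-a\neq 0$ and the bracket $\neq 0$; moreover $g''\neq 0$, since $g''\equiv 0$ would give $g'''\equiv 0$ and kill the torsion. The crucial simplification is to clear the mixed denominator: multiplying (6.3) by $(g'-f')^4/f''$ and isolating the single $x$-dependent occurrence of $f'$ gives
\begin{equation*}
\frac{K_0\,(g'-f')^4}{f''}=\Phi(y)+\Psi(y)\,f',
\end{equation*}
where $\Psi=(h'-a)h''$ and $\Phi=g''(h'-a)^2-g'(h'-a)h''$ depend on $y$ alone. That the right-hand side is merely affine in $f'$ is exactly what makes the mixed factor $g'-f'$ manageable.

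Next I would differentiate this identity in $y$, obtaining $4K_0 g''(g'-f')^3/f''=\Phi'+\Psi'f'$, and then divide it by the undifferentiated identity to cancel $K_0/f''$ in one step:
\begin{equation*}
\frac{4g''}{g'-f'}=\frac{\Phi'+\Psi'f'}{\Phi+\Psi f'}.
\end{equation*}
Cross-multiplying produces a polynomial identity in $f'$ whose coefficients are functions of $y$ only; since $f''\neq 0$, the value $f'$ ranges over an interval for each fixed $y$, so each coefficient vanishes. The $(f')^2$ coefficient gives $\Psi'=0$, the $(f')^1$ coefficient gives $\Phi'=-4g''\Psi$, and the $(f')^0$ coefficient gives $4g''\Phi=g'\Phi'$. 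Combining the last two yields $\Phi=-g'\Psi$, which against $\Phi=g''(h'-a)^2-g'\Psi$ forces $g''(h'-a)^2=0$, impossible as $g''\neq 0$ and $h'-a\neq 0$. Hence $K_0\neq 0$ is untenable and the surface is a generalized cylinder with $K_0=0$.

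The main obstacle is the mixed factor $g'-f'$ in the denominator of (6.3), which prevents the immediate separation of variables that worked in the Type II case (there the analogous factor $g'-a$ was a function of $y$ alone). The device that defeats it is the reduction to a right-hand side affine in $f'$, followed by the differentiate-in-$y$-and-divide maneuver; after that the routine ``polynomial in $f'$ with $y$-dependent coefficients'' argument closes everything, and the torsion condition (6.1) is invoked only at the final step to exclude $g''(h'-a)^2=0$.
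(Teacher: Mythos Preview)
Your argument is correct and complete; both the $K_0=0$ handling and the elimination of $K_0\neq 0$ go through as written. The division by $\Phi+\Psi f'$ is licit because that quantity equals $K_0(g'-f')^4/f''\neq 0$, and the ``polynomial in $f'$'' step is justified since $f''\neq 0$ forces $f'$ to vary over an interval. The final contradiction $g''(h'-a)^2=0$ indeed clashes with the torsion hypothesis.

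Your route, however, differs from the paper's. The paper multiplies (6.3) by $(g'-f')^4/f''$ as you do, but then differentiates in $x$ rather than $y$; since the right-hand side is affine in $f'$, its $x$-derivative collapses to the single term $-(h'-a)h''$ (a function of $y$ alone). This is their equation (6.4). They then split on whether $f'''=0$: if so, a further $x$-derivative of (6.4) forces $f''=0$; if not, a second $x$-derivative annihilates the right-hand side entirely, and one more $y$-derivative produces a polynomial equation in $g'$ with coefficients in $x$, whose leading coefficient $f'''/(f'')^2$ cannot vanish. So the paper ends with a polynomial in $g'$, while you end with a polynomial in $f'$.

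What your approach buys is economy: by noticing up front that the right-hand side is affine in $f'$ and differentiating in $y$, you reach a degree-two polynomial identity in a single step, with no case distinction on $f'''$ and only one differentiation instead of three. What the paper's approach buys is uniformity with its treatment of Type~II (Theorem~5.1), where the first move was also an $x$-derivative; the cost is the extra case split and the somewhat heavier algebra of (6.6)--(6.7). Both arguments ultimately rest on the same mechanism---forcing the coefficients of a polynomial in one of the derivatives to vanish---but yours exploits the affine-in-$f'$ structure more directly.
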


\begin{proof}
Since $\beta $ is a space curve, only possibility in (6.3) that $K_{0}$ vanishes is to be $f^{\prime
\prime }=0,$ namely $\alpha $ is a non-isotropic line. Assume that $K$ is a nonzero constant $K_{0}$, i.e. $f^{\prime \prime} \neq 0$ . Then the partial
derivative of (6.3) with respect to $x$ yields%
\begin{equation}
K_{0}\left[ \frac{-4\left( g^{\prime }-f^{\prime }\right) ^{3}}{f^{\prime
\prime }}-\frac{\left( g^{\prime }-f^{\prime }\right) ^{4}f^{\prime \prime
\prime }}{\left( f^{\prime \prime }\right) ^{3}}\right] =-\left( h^{\prime
}-a\right) h^{\prime \prime }.  \tag{6.4}
\end{equation}%
We have two cases:

\begin{itemize}
\item $f^{\prime \prime \prime }=0,$ $f^{\prime \prime }=c_{1}.$ Then from
(6.4), we get%
\begin{equation}
4K_{0}\left( g^{\prime }-f^{\prime }\right) ^{3}=c_{1}\left( h^{\prime
}-a\right) h^{\prime \prime }.  \tag{6.5}
\end{equation}%
The partial derivative of (6.5) with respect to $x$ gives $f^{\prime \prime
}=0,$ which is not our case due to $K_{0}\neq 0.$

\item $f^{\prime \prime \prime }\neq 0.$ Taking partial derivative of (6.4) with respect to $x$ and after dividing
with $\left( g^{\prime }-f^{\prime }\right) ^{4}$ gives%
\begin{equation}
\left[ \frac{-12}{\left( g^{\prime }-f^{\prime }\right) ^{2}}-\frac{%
4f^{\prime \prime \prime }}{\left( g^{\prime }-f^{\prime }\right) \left(
f^{\prime \prime }\right) ^{2}}\right] +\left[ \frac{4f^{\prime \prime
\prime }}{(f^{\prime \prime })^{2}\left( g^{\prime }-f^{\prime }\right) ^{3}}%
-\left( \frac{f^{\prime \prime \prime }}{\left( f^{\prime \prime }\right)
^{3}}\right) ^{\prime }\right] =0.  \tag{6.6}
\end{equation}%
Taking partial derivative of (6.6) with respect to $y$ and after producting
with $\frac{\left( g^{\prime }-f^{\prime }\right) ^{4}}{g^{\prime \prime }}$
gives the following polynomial equation on $g^{\prime}$:%
\begin{equation}
\frac{f^{\prime \prime \prime }}{\left( f^{\prime \prime }\right) ^{2}}%
\left( g^{\prime }\right) ^{2}+\left[ 6-\frac{2f^{\prime }f^{\prime \prime
\prime }}{\left( f^{\prime \prime }\right) ^{2}}\right] g^{\prime
}+f^{\prime \prime \prime }-6f^{\prime }-3\frac{f^{\prime \prime \prime }}{%
f^{\prime \prime }}=0.  \tag{6.7}
\end{equation}%
This is a contradiction since the coefficient of the term $\left ( g^{\prime } \right)^{2}$ in (6.7) cannot vanish.
\end{itemize}
\end{proof}

By a direct calculation, the isotropic mean curvature is%
\begin{equation}
2H=\frac{\left[ 1+\left( f^{\prime }\right) ^{2}\right] \left[ h^{\prime
\prime }\left( g^{\prime }-f^{\prime }\right) -g^{\prime \prime }\left(
h^{\prime }-a\right) \right] -\left[ 1+\left( g^{\prime }\right) ^{2}\right]
\left( h^{\prime }-a\right) f^{\prime \prime }}{\left( g^{\prime }-f^{\prime
}\right) ^{3}}.  \tag{6.8}
\end{equation}%
First we distinguish the minimality case:

\begin{theorem}
A translation surface in $\mathbb{I}^{3}$ of the form (6.2) cannot be
isotropic minimal.
\end{theorem}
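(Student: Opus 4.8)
The plan is to argue by contradiction, in the same spirit as Theorem 5.2. Setting $H=0$ in (6.8) reduces the minimality condition to the single relation
\begin{equation*}
[1+(f')^2]\,[h''(g'-f')-g''(h'-a)] = [1+(g')^2](h'-a)\,f''. \tag{$\ast$}
\end{equation*}
Since $f$ depends on $x$ alone while $g,h$ depend only on $y$, the idea is to differentiate and separate variables until any purported solution is forced to satisfy $g''h'''-g'''h''=0$, contradicting the space-curve hypothesis (6.1). I would split according to whether $f''$ vanishes.

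If $f''=0$, then $f'$ is constant and $(\ast)$ collapses, after cancelling $1+(f')^2\neq 0$, to $h''(g'-f')=g''(h'-a)$. Because regularity gives $g'-f'\neq 0$, one integrates $h''/(h'-a)=g''/(g'-f')$ to get $h'-a=c_1(g'-f')$, hence $h''=c_1g''$ and $h'''=c_1g'''$, so $g''h'''-g'''h''=0$, contradicting (6.1). The degenerate subcases $g''=0$ or $h'-a\equiv 0$ are absorbed the same way, since each again forces $h''=0$.

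The substantial case is $f''\neq 0$, and here lies the main obstacle: unlike the Type II equation (5.9), the unknown $f'$ now occurs both in the factor $1+(f')^2$ and inside the bracket $g'-f'$, so a naive $x$-differentiation does not separate. The device I would use is to differentiate $(\ast)$ once in $x$ and then eliminate the common bracket $h''(g'-f')-g''(h'-a)$ by means of $(\ast)$ itself; after dividing by $[1+(g')^2](h'-a)$ and by $[1+(f')^2]^2f''$ this pits a pure function of $x$ against a pure function of $y$, yielding a constant $\lambda$ together with $h''=\lambda[1+(g')^2](h'-a)$. Substituting this back into $(\ast)$ and cancelling $h'-a$ leaves $\lambda[1+(f')^2][1+(g')^2](g'-f')-[1+(f')^2]g''=[1+(g')^2]f''$, which separates a second time into a constant $\mu$ and the relation $g''=[1+(g')^2](\lambda g'-\mu)$.

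Finally I would substitute both relations into $g''h'''-g'''h''$. Differentiating $h''=\lambda[1+(g')^2](h'-a)$ gives $h'''=\lambda(h'-a)\bigl(2g'g''+\lambda[1+(g')^2]^2\bigr)$, and expanding $g'',g'''$ through $g''=[1+(g')^2](\lambda g'-\mu)$, the whole expression telescopes to $g''h'''-g'''h''=\lambda(h'-a)\cdot 0=0$, which contradicts (6.1) and finishes the proof. The delicate point is exactly this last cancellation: it is not evident a priori, and it is what justifies carrying out the second separation rather than stopping after producing $\lambda$.
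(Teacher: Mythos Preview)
Your argument is correct and is essentially the same as the paper's: in the case $f''\neq 0$ the paper first divides $(\ast)$ by $[1+(f')^2][1+(g')^2](h'-a)$ and then applies $\partial_x\partial_y$, which immediately yields $\frac{h''}{(h'-a)[1+(g')^2]}=c_1$ (your $\lambda$); your route of differentiating in $x$ and substituting $(\ast)$ back produces the same separation (note a small slip: the correct divisor is $[1+(f')^2]f''$, not $[1+(f')^2]^2f''$). The second separation and the final contradiction with (6.1) are identical in content---the paper integrates $\frac{h''}{h'-a}=\frac{c_1g''}{c_1g'+d_1}$ to write $h$ as an affine function of $g$ (hence $g''h'''-g'''h''=0$), while you verify the same vanishing directly from the two ODEs, which is equivalent.
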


\begin{proof}
We prove it by contradiction. If the surface is isotropic minimal, then
(6.8) reduces to%
\begin{equation}
\left[ 1+\left( f^{\prime }\right) ^{2}\right] \left[ h^{\prime \prime
}\left( g^{\prime }-f^{\prime }\right) -g^{\prime \prime }\left( h^{\prime
}-a\right) \right] -\left[ 1+\left( g^{\prime }\right) ^{2}\right] \left(
h^{\prime }-a\right) f^{\prime \prime }=0.  \tag{6.9}
\end{equation}
We have two cases:

\begin{itemize}
\item $f^{\prime \prime }=0,$ $f=d_{1}x+d_{2}.$ Then (6.9) reduces to%
\begin{equation*}
h^{\prime \prime }\left( g^{\prime }-d_{1}\right) -g^{\prime \prime }\left(
h^{\prime }-a\right) =0.
\end{equation*}%
However, this is not possible due to (6.1).

\item $f^{\prime \prime }\neq 0.$ By dividing $\left(6.9\right) $ with $%
\left[ 1+\left( g^{\prime }\right) ^{2}\right] \left[ 1+\left( f^{\prime
}\right) ^{2}\right] \left( h^{\prime }-a\right) ,$ we derive 
\begin{equation}
\frac{h^{\prime \prime }\left( g^{\prime }-f^{\prime }\right) }{\left(
h^{\prime }-a\right) \left[ 1+\left( g^{\prime }\right) ^{2}\right] }+\frac{%
f^{\prime \prime }}{1+\left( f^{\prime }\right) ^{2}}-\frac{g^{\prime \prime
}}{1+\left( g^{\prime }\right) ^{2}}=0.  \tag{6.10}
\end{equation}%
The partial derivative of (6.10) with respect to $x$ and $y$ yields%
\begin{equation}
\frac{h^{\prime \prime }}{\left( h^{\prime }-a\right) \left[ 1+\left(
g^{\prime }\right) ^{2}\right] }=c_{1}.  \tag{6.11}
\end{equation}%
Substituting (6.11) into (6.10) gives%
\begin{equation}
\frac{f^{\prime \prime }}{1+\left( f^{\prime }\right) ^{2}}-c_{1}f^{\prime
}=d_{1}\text{ and }\frac{g^{\prime \prime }}{1+\left( g^{\prime }\right) ^{2}%
}-c_{1}g^{\prime }=d_{1}.  \tag{6.12}
\end{equation}%
By considering the second equality in (6.12) into (6.11), we obtain%
\begin{equation}
\frac{h^{\prime \prime }}{h^{\prime }-a }=\frac{c_{1}g^{\prime
\prime }}{c_{1}g^{\prime }+d_{1}}.  \tag{6.13}
\end{equation}%
Solving (6.13) implies a contradiction due to (6.1).
\end{itemize}
\end{proof}

\begin{theorem}
A translation surface in $\mathbb{I}^{3}$ of the form (6.2) with nonzero
CIMC $H_{0}$ is, up to suitable
translations and constants, a generalized cylinder with non-isotropic rulings over the curve
\begin{equation*}
\beta \left( y\right)
=\left( y,g\left( y\right) ,H_{0} g^{2}(y)+ay\right) , 
\end{equation*}
where $g$ is a non-linear function.
\end{theorem}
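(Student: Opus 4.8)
The plan is to write the CIMC condition $H\equiv H_0\neq0$ by clearing the denominator in the mean-curvature formula (6.8), namely
\[
2H_0\,(g'-f')^3=\bigl[1+(f')^2\bigr]\bigl[h''(g'-f')-g''(h'-a)\bigr]-\bigl[1+(g')^2\bigr](h'-a)f'',
\]
and then to split the discussion into $f''=0$ and $f''\neq0$, exactly as in the proofs of Theorems 6.1 and 6.2. Throughout I would invoke the space-curve hypothesis (6.1); in particular it forbids $g''\equiv0$, so $g'$ is non-constant on a subinterval and runs through a whole interval of values for each fixed $x$, which is what makes the polynomial arguments below work.

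I would first dispose of the case $f''=0$, which should reproduce precisely the asserted surface. Writing $f=d_1x+d_2$, the last term above disappears and, after dividing by $(g'-d_1)^2$, the condition becomes
\[
\frac{2H_0}{1+d_1^2}\,(g'-d_1)=\Bigl(\frac{h'-a}{g'-d_1}\Bigr)'.
\]
Integrating twice in $y$ gives $h=\frac{H_0}{1+d_1^2}(g-d_1y)^2+c_1(g-d_1y)+c_2+ay$. Normalising the ruling direction so that $d_1=0$ (an isotropic rotation sends the ruling $(1,d_1,a)$ to one with vanishing second component) and then absorbing $c_1,c_2$ by a translation in the second coordinate together with completing the square, one is left with $h=H_0g^2+ay$. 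Since $f''=0$, the surface is a generalized cylinder with non-isotropic rulings over $\beta(y)=(y,g(y),H_0g^2(y)+ay)$; were $g$ linear we would have $g''=0$, violating (6.1), so $g$ is non-linear, as claimed.

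The crux is to rule out $f''\neq0$, and here the feature that rescued the Type II computation in Theorem 5.3 is absent. Differentiating the displayed identity in $x$ produces a left-hand side $-6H_0(g'-f')^2f''$: in Type II the analogous term vanishes because $(g'-a)^3$ depends on $y$ alone, whereas here $(g'-f')^3$ genuinely depends on $x$. Using the identity itself to eliminate the bracket $h''(g'-f')-g''(h'-a)$ and then solving for $h''$, one expresses $h''$ explicitly through $g',h'$ and the $f$-derivatives. Because $h''$ is a function of $y$ only, I would impose $\partial_xh''=0$, which I expect to collapse to a relation of the schematic form
\[
L_3(g')^3+L_2(g')^2+L_1g'+L_0=q(x)\,\bigl[1+(g')^2\bigr](h'-a),
\]
with coefficients $L_k=L_k(x)$ explicit in $f',f'',f'''$. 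For each fixed $y$ this must hold for all $x$, and since $(1+(g')^2)(h'-a)$ is then a fixed constant, the cubics $\xi\mapsto\sum_kL_k(x)\xi^k$ are mutually proportional as $x$ varies; hence $L_k(x)=q(x)\ell_k$ for constants $\ell_k$, and every ratio $L_k/L_j$ is constant. A direct computation gives
\[
\frac{L_3}{L_0}=\frac{1-3(f')^2}{f'\,(3-(f')^2)},
\]
which is non-constant precisely because $f'$ is non-constant ($f''\neq0$); this is the contradiction, and it parallels the non-vanishing-coefficient contradiction at (6.7). The anticipated technical difficulty lies entirely in the elimination that produces the $L_k$: one must check that $\partial_xh''=0$ really reduces to a cubic in $g'$ with purely $x$-dependent coefficients, so that the proportionality argument applies, and that the displayed ratio is genuinely non-constant.
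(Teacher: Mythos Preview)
Your treatment of the case $f''=0$ coincides with the paper's: both reduce (6.8) to $\frac{2H_0}{1+d_1^2}(g'-d_1)=\bigl(\frac{h'-a}{g'-d_1}\bigr)'$ and integrate twice.

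For $f''\neq0$ you take a genuinely different and more economical route than the paper. The paper multiplies (6.8) by $(g'-f')^{3}/[1+(f')^{2}]$, differentiates in both $x$ and $y$, and introduces auxiliary functions $A(y)$, $B(x)$, $C(y)$ (equations (6.15)--(6.20)); three further differentiations lead to the dichotomy $B'=0$ versus $B'\neq0$, each dispatched by a separate polynomial-coefficient argument culminating in (6.24). Your approach, by contrast, differentiates once in $x$, uses the original identity to eliminate the bracket $h''(g'-f')-g''(h'-a)$, and isolates $h''$; the constraint $\partial_x h''=0$ then yields the single relation $\sum_k L_k(x)(g')^k=q(x)\,[1+(g')^2](h'-a)$ directly. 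Your $q(x)$ is in fact the paper's $B'(x)$, so the two subcases of the paper are implicit in your argument: when $q\not\equiv0$ the proportionality $L_k=q(x)\ell_k$ and the explicit ratio $L_3/L_0=(1-3(f')^{2})/[f'(3-(f')^{2})]$ give the contradiction in one stroke, avoiding the iterated $y$-differentiations and the auxiliary functions $A$, $C$ entirely. What your method buys is brevity; what the paper's buys is a more mechanical, if longer, separation-of-variables bookkeeping.

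One point to tidy: when $q(x)\equiv0$ the identity becomes $\sum_k L_k(x)(g')^k=0$, so every $L_k\equiv0$ and your ratio $L_3/L_0$ is undefined. You should note separately that $L_3\equiv0$ already forces $(1-3(f')^{2})f''\equiv0$, hence $f'$ constant, contradicting $f''\neq0$; with that remark the argument is complete.
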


\begin{proof}
We divide the proof into two cases:
\begin{itemize}
\item $f^{\prime \prime }=0.$ Then $f\left( x\right)
=d_{1}x+d_{2}$ and (6.8) reduces to
\begin{equation}
\frac{2H}{1+d_{1}^{2}}\left( g^{\prime }-d_{1}\right) =\left( \frac{%
h^{\prime }-a}{g^{\prime }-d_{1}}\right) ^{\prime }.  \tag{6.14}
\end{equation}%
After twice integration of (6.14), we obtain%
\begin{equation*}
h=\frac{H_{0}}{1+d_{1}^{2}}\left( g-d_{1}y\right) ^{2}+ay+d_{3}\left(
g-d_{1}y\right) +d_{4},
\end{equation*}%
which gives the hypothesis of the theorem. 
\item $f^{\prime \prime }\neq 0.$ By producting (6.8) with $\frac{\left( g^{\prime
}-f^{\prime }\right) ^{3}}{1+\left( f^{\prime }\right) ^{2}}$ and taking
partial derivative with respect to $x$ and $y$ we have%
\begin{equation}
\left. 
\begin{array}{l}
12H_{0}\left[ \frac{\left( g^{\prime }-f^{\prime }\right) f^{\prime \prime
}g^{\prime \prime }}{1+\left( f^{\prime }\right) ^{2}}+\frac{\left(
g^{\prime }-f^{\prime }\right) ^{2}f^{\prime }f^{\prime \prime }g^{\prime
\prime }}{\left[ 1+\left( f^{\prime }\right) ^{2}\right] ^{2}}\right]
=h^{\prime \prime \prime }f^{\prime \prime }+ \\ 
+\left\{ 2g^{\prime }g^{\prime \prime }\left( h^{\prime }-a\right) +\left[
1+\left( g^{\prime }\right) ^{2}\right] h^{\prime \prime }\right\} \left( 
\frac{f^{\prime \prime }}{1+\left( f^{\prime }\right) ^{2}}\right) ^{\prime
}.%
\end{array}%
\right.  \tag{6.15}
\end{equation}%
By dividing (6.15) with $12H_{0}f^{\prime \prime }g^{\prime \prime }$ and
put 
\begin{equation}
A\left( y\right) =2g^{\prime }\left( h^{\prime }-a\right)
+\frac{1+\left( g^{\prime }\right) ^{2}}{g^{\prime \prime}}  h^{\prime \prime }\text{
and }B\left( x\right) =\frac{\left( f^{\prime \prime }/1+\left( f^{\prime
}\right) ^{2}\right) ^{\prime }}{f^{\prime \prime }},  \tag{6.16}
\end{equation}%
we conclude%
\begin{equation}
\frac{\left( g^{\prime }-f^{\prime }\right) }{1+\left( f^{\prime }\right)
^{2}}+\frac{\left( g^{\prime }-f^{\prime }\right) ^{2}f^{\prime }}{\left[
1+\left( f^{\prime }\right) ^{2}\right] ^{2}}=\frac{1}{12H_{0}}\left( \frac{%
h^{\prime \prime \prime }}{g^{\prime \prime }}+AB\right) .  \tag{6.17}
\end{equation}%
Taking partial derivative of (6.17) with respect to $y$ gives%
\begin{equation}
\frac{1}{1+\left( f^{\prime }\right) ^{2}}+\frac{2\left( g^{\prime
}-f^{\prime }\right) f^{\prime }}{\left[ 1+\left( f^{\prime }\right) ^{2}%
\right] ^{2}}=\frac{1}{12H_{0}}\left[ \frac{\left( h^{\prime \prime \prime
}/g^{\prime \prime }\right) ^{\prime }}{g^{\prime \prime }}+\frac{A^{\prime }%
}{g^{\prime \prime }}B\right] .  \tag{6.18}
\end{equation}%
Again taking partial derivative of (6.18) with respect to $y$ and putting%
\begin{equation}
C\left( y\right) =\left[ \frac{\left( h^{\prime \prime \prime }/g^{\prime
\prime }\right) ^{\prime }}{g^{\prime \prime }}\right] ^{\prime }, 
\tag{6.19}
\end{equation}%
we deduce%
\begin{equation}
\frac{f^{\prime }}{\left[ 1+\left( f^{\prime }\right) ^{2}\right] ^{2}}=%
\frac{1}{24H_{0}}\left[ \frac{C}{g^{\prime \prime }}+\frac{\left( A^{\prime
}/g^{\prime \prime }\right) ^{\prime }}{g^{\prime \prime }}B\right] . 
\tag{6.20}
\end{equation}%
The partial derivative of (6.20) with respect to $x$ and $y$ yields%
\begin{equation}
0=\left[ \frac{\left( A^{\prime }/g^{\prime \prime }\right) ^{\prime }}{%
g^{\prime \prime }}\right] ^{\prime }B^{\prime }.  \tag{6.21}
\end{equation}%
From (6.21), we have two possibilities:

\begin{enumerate}
\item[\textbf{(1)}] $B=const.$ Taking partial derivative of (6.18) with respect to $x$ and
producting with $\frac{\left[ 1+\left( f^{\prime }\right) ^{2}\right] ^{3}}{%
f^{\prime \prime }}$ implies the following polynomial equation on $f^{\prime }$:%
\begin{equation*}
(f^{\prime })^{3}-3g^{\prime }(f^{\prime })^{2}-3f^{\prime }+g^{\prime }=0  
\end{equation*}%
which yields a contradiction.

\item[\textbf{(2)}] $B\neq const.$ Then from (6.21), we derive%
\begin{equation}
A=d_{1}\left( g^{\prime }\right) ^{2}+d_{2}g^{\prime }+d_{3}  \tag{6.22}
\end{equation}%
and, from (6.20), $C=d_{4}g^{\prime \prime }$. Then by (6.19) we deduce%
\begin{equation}
\frac{h^{\prime \prime \prime }}{g^{\prime \prime }}=d_{5}\left( g^{\prime
}\right) ^{2}+d_{6}g^{\prime }+d_{7}.  \tag{6.23}
\end{equation}%
Substituting (6.22) and (6.23) into (6.17) yields the polynomial equation on $g^{\prime}$:%
\begin{equation*}
\left. 
\begin{array}{l}
\left\{ \frac{f^{\prime }}{\left[ 1+\left( f^{\prime }\right) ^{2}\right]
^{2}}-\frac{d_{5}+d_{1}B}{12H_{0}}\right\} \left( g^{\prime }\right)
^{2}+\left\{ \frac{1-\left( f^{\prime }\right) ^{2}}{\left[ 1+\left(
f^{\prime }\right) ^{2}\right] ^{2}}-\frac{d_{6}+d_{2}B}{12H_{0}}\right\}
g^{\prime }+ \\ 
+\left\{ \frac{- f^{\prime }}{\left[ 1+\left( f^{\prime }\right) ^{2}\right] ^{2}}-%
\frac{d_{7}+d_{3}B}{12H_{0}}\right\} =0,%
\end{array}%
\right.
\end{equation*}%
where the coefficients must vanish, namely%
\begin{equation}
\left\{ 
\begin{array}{c}
\frac{f^{\prime }}{\left[ 1+\left( f^{\prime }\right) ^{2}\right] ^{2}}=%
\frac{d_{5}+d_{1}B}{12H_{0}}, \\ 
\frac{1-\left( f^{\prime }\right) ^{2}}{\left[ 1+\left( f^{\prime }\right)
^{2}\right] ^{2}}=\frac{d_{6}+d_{2}B}{12H_{0}},\\ 
\frac{- f^{\prime }}{\left[ 1+\left( f^{\prime }\right) ^{2}\right] ^{2}}=\frac{%
d_{7}+d_{3}B}{12H_{0}}.%
\end{array}%
\right. \tag{6.24}
\end{equation}%
It is clear that $d_{1},d_{2},d_{3}$ cannot vanish. By using the first and second equation in (6.24) we obtain the polynomial
equation on $f^{\prime }$:%
\begin{equation*}
d_{1}-d_{1}(f^{\prime })^{2}-d_{2}f^{\prime }=\frac{d_{1}d_{6}-d_{2}d_{5}}{12H_{0}}\left[ 1+\left(
f^{\prime }\right) ^{2}\right] ^{2},
\end{equation*}%
which yields a contradiction. 
\end{enumerate}
\end{itemize}
\end{proof}

\section{Several Remarks}
\begin{enumerate}
\item[\textbf{(1)}] To classify the surfaces of Type IV with arbitrary constant CIGC and CIMC is somewhat complicated, but still it could be a challenging open problem. 

\item[\textbf{(2)}] Isotropic flat translation surfaces of the form (1.1) are generalized cylinders with non-isotropic rulings, that is, one of the translating curves is a non-isotropic line. Therefore, it is clear that there does not exist an isotropic flat surface which belongs to Type IV. In addition, there does not exist
\begin{itemize}
\item a surface of Type I.3, Type II and Type III with non-zero CIGC;

\item an isotropic minimal surface of Type II and Type III.

\end{itemize}

\item[\textbf{(3)}] Isotropic minimal translation surfaces belong to the \textit{family of isotropic Scherk surfaces}. If translating curves are in orthogonal planes, the members of this family are given by (\cite {27})
\begin{itemize}
\item $r\left( x,y\right) =\left( x,y,c\left[ x^{2}-y^{2}\right]
\right) ,$ 

\item $r\left( x,z\right) =\left( x,\frac{1}{c}\log \left\vert 
\frac{cz}{\cos \left( cx\right) }\right\vert ,z\right)$,

\item $r\left( y,z\right) =\frac{1}{2}\left( \frac{1}{c}\log
\left\vert \frac{\cos \left( cz\right) }{\cos \left( cy\right) }\right\vert
,y-z+\pi ,y+z\right)$, $c\in \mathbb{R}-{0}$.
\end{itemize}
If translating curves are in arbitrary planes, the isotropic Scherk surfaces can be described in the explicit forms:
\begin{itemize}
\item $z\left( x,y\right) =c\left[(a_{11}x+a_{12}y)^{2}-\frac{a_{11}^{2}+a_{12}^{2}}{a_{21}^{2}+a_{22}^{2}}(a_{21}x+a_{22}z)^{2}\right]$ (\cite{1}),

\item $y(x,z)=\frac{1}{c}\log \left\vert \frac{\cos \left (\frac {cx}{a_{11}} \right )}{c(a_{21}x+a_{22}z)}\right\vert,$

\item $y(x,z)=\frac{1}{c}\log \left\vert \frac{\cos \left (\frac {ca_{22}}{\det\left( a_{ij}\right) }[a_{11}x+a_{12}z] \right )}{\cos \left ( \frac {c a_{12}}{\det\left( a_{ij}\right)}[a_{21}x+a_{22}z] \right )}\right\vert,$ $c\in \mathbb{R}-{0}$.
\end{itemize}
\end{enumerate}

\end{document}